\newtheorem{lemma}{Lemma}
\newtheorem{theorem}{Theorem}
\newtheorem{proposition}{Proposition}
\newtheorem{remark}{Remark}
\newtheorem{example}{Example}
\numberwithin{theorem}{section} 
\numberwithin{lemma}{section} 
\numberwithin{equation}{section}
\numberwithin{example}{section}
\numberwithin{table}{section}
\def\dfrac{\displaystyle\frac}
\journal{Journal}
\begin{document}
\begin{frontmatter}

\title{An optimal preconditioner for high-order scheme arising from multi-dimensional Riesz space fractional diffusion equations with variable coefficients \tnoteref{mytitlenote}}
\tnotetext[mytitlenote]{The work of Wei Qu was supported by the research grant 2024KTSCX069 from the Characteristic Innovation Projects of Ordinary Colleges and
Universities in Guangdong Province. The work of Sean Y. Hon was supported in part by NSFC under grant 12401544 and a start-up grant from the Croucher Foundation. The work of Siu-Long Lei was supported by the research grants MYRG-GRG2024-00237-FST and MYRG-GRG2023-00181-FST-UMDF from University of Macau.}

\author[add1]{Yuan-Yuan Huang}

\author[add2]{Wei Qu}

\author[add1]{Sean Y. Hon}

\author[add3]{Siu-Long Lei\corref{cor1}}\ead{sllei@um.edu.mo}
\cortext[cor1]{Corresponding author}

\address[add1]{Department of Mathematics, Hong Kong Baptist University, Kowloon Tong, Hong Kong Special Administrative Region of China}
\address[add2]{School of Mathematics and Statistics, Shaoguan University, Shaoguan 512005, China}
\address[add3]{Department of Mathematics, University of Macau, Macao Special Administrative Region of China}

\begin{abstract}
In this paper, we propose an efficient method for solving multi-dimensional Riesz space fractional diffusion equations with variable coefficients. The Crank–Nicolson (CN) method is used for temporal discretization, while the fourth-order fractional centered difference (4FCD) method is employed for spatial discretization. Using a novel technique, we show that the CN-4FCD scheme for the multi-dimensional case is unconditionally stable and convergent, achieving second-order accuracy in time and fourth-order accuracy in space with respect to the discrete $L^2$-norm. Moreover, leveraging the symmetric multi-level Toeplitz-like structure of the coefficient matrix in the discrete linear systems, we enhance the computational efficiency of the proposed scheme with a sine transform-based preconditioner, ensuring a mesh-size-independent convergence rate for the conjugate gradient method. Finally, two numerical examples validate the theoretical analysis and demonstrate the superior performance of the proposed preconditioner compared to existing methods.
\end{abstract}

\begin{keyword}
Variable-coefficient Riesz space fractional diffusion equations; High-order symmetric multi-level Toeplitz systems; Stability and convergence; Sine transform based preconditioner; Mesh-sizes independent convergence rate; Krylov subspace methods
\end{keyword}

\end{frontmatter}

\section{Introduction}
Over the past few decades, fractional differential equations have been widely studied due to their numerous applications and their enhanced modeling capabilities across a broad spectrum of problems, such as, physics \cite{BWM1,BWM2}, biology \cite{Magin1}, image processing \cite{BF1}, finance \cite{RSM1} and more. 

Compared to integer differential operators, fractional operators exhibit a non-local feature, which brings significant challenges to research. In most cases, it is even impossible to obtain the exact solutions of fractional differential equations. For this reason, more attention is being devoted to solving fractional differential equations efficiently using numerical methods. Among them, we mention some widely used numerical methods such as finite difference methods \cite{MT1,MT2,SL1,TZD1,lin2021stability,LL1,she2022class,zhu2021efficient,huang2023tau}, finite volume methods \cite{fu2019stability,donatelli2018spectral,LZTBA1,fu2019finite,qu2021fast}, finite element methods \cite{JM1,JLPZ1,bu2014galerkin}. 

However, solving the discrete linear systems generated by the above methods is computationally demanding, primarily because fractional differential operators are non-local. As a result, their numerical discretization leads to dense linear systems, which are usually ill-conditioned as the matrix size or the dimension increases. Consequently, this has promoted the development of fast algorithms for solving such discrete linear systems. Various techniques have been proposed and analyzed in the literature. Representative methods include the multigrid method \cite{PS1,moghaderi2017spectral,lin2017multigrid}, Krylov subspace methods with circulant preconditioners \cite{lei2013circulant,lei2016multilevel}, banded preconditioners \cite{JLZ1,SLYL1}, approximate inverse preconditioners \cite{PKNS1}, structure-preserving preconditioners \cite{DMS1}, rational preconditioners \cite{aceto2023rational}, and sine transform based preconditioners \cite{huang2021spectral,zhang2022fast,huang2023tau,hon2024symbol,huang2024optimal,li2025multilevel,huang2025efficient}. In addition to these methods, there exist many other fast solution algorithms for fractional differential equations, which will not be discussed here.

In this paper, we focus on the following multi-dimensional Riesz space fractional diffusion equations (RSFDEs) with variable coefficients \cite{bai2020fast,zhang2024two,she2024tau}:
\begin{equation}\label{RSFDEs}
\begin{small}
\left\{
\begin{aligned}
&r(\boldsymbol{x},t)\frac{\partial u(\boldsymbol{x},t)}{\partial t} = \sum_{i=1}^d K_i \frac{\partial^{\alpha_i}u({\boldsymbol{x}},t)}{\partial\left|x_i\right|^{\alpha_i}}+f(\boldsymbol{x},t),~~\boldsymbol{x}\in\Omega\subset \mathbb{R}^d,~~t\in(0,T],\\
&u(\boldsymbol{x},t)=0, \quad \boldsymbol{x}\in\partial\Omega,\\
&u(\boldsymbol{x},0)=\psi(\boldsymbol{x}),~\boldsymbol{x}\in\Omega,
\end{aligned}
\right.
\end{small}
\end{equation}
where $\alpha_i \in (1,2)$, $K_i>0$ for $i=1,2,\ldots,d$ with $d\geq 1$ being the dimension of the problem. $r(\boldsymbol{x},t)$ is a bounded positive
function, i.e., $0<\check{r}\leq r({\boldsymbol{x}}, t)\leq {\hat r}$. $\Omega=\prod\limits_{i=1}^d\left(\check{a}_i, \hat{a}_i\right)$ and $\partial\Omega$ denotes its boundary; $\boldsymbol{x}=\left(x_1, x_2, \ldots, x_d\right)$ is a point in $\mathbb{R}^d$. $\psi(\boldsymbol{x})$ is the initial value and $f(\boldsymbol{x},t)$ is a given source term. $\frac{\partial^{\alpha_i}u({\boldsymbol{x}},t)}{\partial\left|x_i\right|^{\alpha_i}}$ denotes the Riesz fractional derivative with respect to the variable $x_i$,
which is defined by
\begin{equation}\nonumber
\begin{small}
\frac{\partial^{\alpha_i}u({\boldsymbol{x}},t)}{\partial\left|x_i\right|^{\alpha_i}}=-\frac{1}{2\cos(\alpha_i\pi/2)\Gamma(2-\alpha_i)} \frac{\partial^{2}}{\partial {x_i^2}}\int_{{\check a}_i}^{\hat{a}_i}\frac{u(x_1,x_2,\ldots,x_{i-1},\eta,{x_{i+1}},\dots,x_d,t)}{|x_i-\eta|^{\alpha_i-1}}{\mathrm{d}}\eta,
\end{small}
\end{equation}
in which $\Gamma(\cdot)$ denotes the gamma function.

To date, the model (\ref{RSFDEs}) has attracted significant attention from researchers and several efficient numerical schemes have been developed to solve (\ref{RSFDEs}). More specifically, one of the most widely used schemes is the first-order scheme, which is based on the relationship between the Riesz derivative and the left and right Riemann-Liouville (RL) derivatives. For example, Bai et al. \cite{bai2017diagonal,bai2020fast} adopted the shifted Grünwald-Letnikov formula proposed by Meerschaert and Tadjeran \cite{MT2} to obtain a first-order scheme for $d$-dimensional problems ($d=1,2,3$). After discretization using the finite difference method, a symmetric positive definite (SPD) diagonal-plus-Toeplitz linear system was obtained for (\ref{RSFDEs}), whose coefficient matrix at each time level has the form of $D+T$ with $D$ being a positive diagonal matrix and $T$ being a symmetric $d$-level Toeplitz matrix. 

In the early stage, for solving the diagonal-plus-Toeplitz system arising from the image deconvolution problem, Ng and Pan \cite{ng2010approximate} proposed the preconditioned conjugate gradient (PCG) method using an approximate inverse circulant-plus-diagonal preconditioner. This approach can be implemented efficiently using fast Fourier transforms and avoids the direct inversion of the coefficient matrices. 
Later, as another application for RSFDEs with variable coefficients, Bai et al. \cite{bai2017diagonal} proposed an unconditionally convergent diagonal and Toeplitz splitting (DTS) iteration method for solving such systems in the one-dimensional case. In addition, for further improving the convergence of Krylov subspace iteration methods like the generalized minimal residual (GMRES) method and the biconjugate gradient stabilized (BiCGSTAB) method, the authors also constructed a diagonal and circulant splitting (DCS) preconditioner, which is induced from the DTS iteration method with a circulant or skew-circulant approximation to the associated Toeplitz matrix,
see \cite{bai2017diagonal,Lu-CAM} for more details. Extending the idea in \cite{bai2017diagonal}, Bai and Lu \cite{bai2020fast} also proposed the DCS-type preconditioners for solving two- and three-dimensional cases. Numerical results are reported to show that the fast convergence rate of the preconditioned GMRES (PGMRES) method and preconditioned BiCGSTAB (PBiCGSTAB) method with the proposed preconditioners.



As shown in \cite{bini1990new,serra1999superlinear}, the natural $\tau$ and optimal $\tau$ matrices, which can be diagonalized by the sine transform matrix, are good approximation to the symmetric Toeplitz matrices. Inspired by this fact, Huang et al. in \cite{huang2021spectral} proposed a sine transform based preconditioner for solving the ill-conditioned symmetric Toeplitz linear system that arises from steady-state multi-dimensional Riesz fractional diffusion equation with constant coefficients. They also proved that the spectrum of the preconditioned matrix involving the natural $\tau$ preconditioner is uniformly bounded in the open interval $(1/2,3/2)$. As a result, the PCG method with the $\tau$ preconditioner converges independently of the mesh size, outperforming that with the circulant preconditioner from both a theoretical and numerical perspective. 

Following the idea in \cite{huang2021spectral}, various sine transform based splitting preconditioners induced from the DTS iteration method have been designed for the discrete linear system; see, for instance, \cite{lu2021splitting,Tang2022AML,shao2022preconditioner,tang2024new} and references therein. Moreover, by approximating the inverse of SPD Toeplitz matrix with a sine transform based matrix and combining them row-by-row, Zeng el al. \cite{zeng2022tau} proposed a novel sine transform based approximate inverse preconditioning technique for diagonal-plus-Toeplitz systems, and the performance of the proposed preconditioner is satisfactory. However, it is remarked that the induced preconditioners and the approximate inverse preconditioner are not symmetric, hence, the PCG method cannot be chosen for solving the resulting SPD linear systems. Instead, the authors resort to the PGMRES method. Although the PGMRES method achieves a rapid convergence rate from numerical experiments, theoretical results showing that the spectrum of the preconditioned matrix is clustered around 1. However, in general, the eigenvalues alone cannot fully characterize the convergence rate of GMRES \cite{greenbaum1996any,greenbaum1994matrices}. Thus, it is better to prove the mesh-size independent convergence of the PGMRES method. Unfortunately, at this stage, this issue remains unresolved even for the first-order scheme. 


To improve spatial discretization accuracy, various second-order methods \cite{TZD1,lin2021stability,SL1,LL1} have been subsequently developed to discretize the RL derivatives, which are essentially suitable for solving such RSFDEs (\ref{RSFDEs}). In addition, by introducing the concept of the fractional central difference (FCD) operator given by Ortigueira in \cite{ortigueira2006riesz}, \c{C}elik and Duman \cite{CD2} constructed a second-order FCD (2FCD) scheme to solving RSFDEs with constant coefficients. Very recently, Zhang et al. \cite{zhang2024two,zhang2025two} have respectively adopted the second-order FCD scheme for solving high-dimensional RSFDEs (\ref{RSFDEs}) and variable-coefficient fractional diffusion equations with time delay problem. Meanwhile, they also proposed the PCG method with $\tau$ preconditioner for solving the symmetric $D+T$ linear systems, and the convergence of the PCG method is also shown to be mesh-sizes independent. Besides, by adopting the quasi-compact operator proposed by Hao et al. \cite{hao2015fourth}, a novel quasi-compact scheme for high-dimensional RSFDEs (\ref{RSFDEs}) has also been constructed in \cite{zhang2024two}, which further improve the spatial accuracy to fourth order. 

Typically, the quasi-compact operator generates a tridiagonal matrix, which consequently results in a non-symmetric discrete linear system for variable-coefficient RSFDEs (\ref{RSFDEs}). In this case, Zhang et al. \cite{zhang2024two} also utilize the PBiCGSTAB method with a sine transform based preconditioner for solving the non-symmetric linear systems. However, the spectral analysis for the preconditioned matrix is not derived theoretically. Later, a one-sided PGMRES solver for the quasi-compact scheme is studied by She et al. \cite{she2024tau}. More importantly, the mesh-size independent convergence of the PGMRES method with the $\tau$ preconditioner is analyzed accordingly. To the best of our knowledge, this may be the first attempt to present a rigorous theoretical analysis of the convergence of the PGMRES method for solving the variable-coefficient fractional diffusion equations, which indeed guarantees the efficiency of the PGMRES method. 

To balance computational efficiency and high-order accuracy for solving (\ref{RSFDEs}), a novel fourth-order scheme is proposed in this paper. Extending beyond existing approaches, we make the following contributions: 
\begin{itemize}
    \item Using a new technique, we establish the stability and convergence of the proposed scheme for the $d$-dimensional case without any restrictions on the spatial step size. In contrast, the stability and convergence of the quasi-compact scheme is analyzed under the assumption that the spatial step is small. For more details, see Theorems 3.1 and 3.2 in \cite{zhang2024two}.
    
    
    
    \item Compared to the second-order 2FCD scheme \cite{CD2}, our proposed scheme achieves fourth-order spatial accuracy while maintaining the same matrix structure as the 2FCD scheme, without incurring additional computational cost or storage requirements. Furthermore, in contrast to the fourth-order quasi-compact scheme \cite{zhang2024two,she2024tau}, our approach attains comparable fourth-order accuracy while avoiding the introduction of tridiagonal matrices. Importantly, our scheme preserves the symmetric structure of the resulting coefficient matrix, which simplifies implementation when using the PCG method.
    
    \item We develop a novel sine transform based preconditioner to efficiently solve the linear systems arising from the RSFDEs (\ref{RSFDEs}), and show that the spectrum of the preconditioned matrix is uniformly bounded within an interval independent of the mesh size; see Theorem \ref{eigenprematrix}. This result implies that the PCG method, when equipped with the proposed preconditioner, can achieve mesh-size independent convergence rate. To the best of our knowledge, this is the first theoretical analysis of the convergence behavior of the PCG method for high-order schemes associated with variable-coefficient RSFDEs. Our numerical experiments support the theoretical findings and further demonstrate that the proposed method outperforms existing preconditioned Krylov subspace methods in terms of computational efficiency.
\end{itemize}

The rest of this paper is organized as follows. In Section $2$, we present the fully discrete scheme for multi-dimensional RSFDEs (\ref{RSFDEs}) with variable coefficients and establish its unconditional stability and convergence in the discrete $L^2$-norm. In Section $3$, by
exploiting the symmetric multilevel Toeplitz structure of the discrete linear system, we propose a sine transform based preconditioning strategy to accelerate the convergence rate of the CG method when solving the resulting linear systems and show the PCG method can achieve an optimal convergence rate (a convergence rate independent of mesh sizes). In Section $4$, we carry out numerical experiments to verify the accuracy of the proposed scheme and illustrate the good performance of the proposed preconditioned solver. Finally, we provide some concluding remarks in Section $5$.

\section{Fully-Discrete Scheme}\label{Sec-2}
For the fourth-order fractional centered difference
(4FCD) approximation of the operator $\frac{\partial^{\alpha_i}u({x})}{\partial\left|x\right|^{\alpha_i}}$, we have the following lemma:
\begin{lemma}{\rm (\cite{ding2023high})}\label{1Doperator}
Let 
\begin{equation}\nonumber
\mathcal{L}^{4+\mu}(\mathbb{R})=\left\{u(x)\in L^{1}(\mathbb{R})|\int_{-\infty}^{\infty}(1+|\eta|)^{4+\mu}|\hat{u}(\eta)|{\mathrm d}\eta<\infty\right\},
\end{equation}
be the fractional Sobolev space, where $\hat{u}(\eta)=\int_{-\infty}^{\infty}u(x)e^{{\bf i}\eta x}dx$ is the Fourier transformation of $u(x)$ for all $\eta\in(-\infty,\infty)$ and ${\bf i}=\sqrt{-1}$. 
Suppose $u(x)\in\mathcal{L}^{4+\mu}(\mathbb{R})$. Then, we have the following fourth-order approximation:
\begin{eqnarray}\nonumber
\dfrac{\partial^{\alpha_i} u(x)}{\partial |x|^{\alpha_i}}=\delta_x^{\alpha_i}u(x)+\mathcal{O}(h^4), \quad h\to 0,
\end{eqnarray}
where $h$ is the spatial step size and the operator $\delta_{x}^{\alpha_i}u(x)$ is defined by
\begin{eqnarray}\label{4FCD}
\delta_{x}^{\alpha_i}u(x)=-\frac{1}{h^{\alpha_i}}\sum_{k=-\infty}^{\infty}s_{k}^{(\alpha_i)}
u(x-kh).
\end{eqnarray}
Here 
the coefficients $s_{k}^{(\alpha_i)}$ $(k=0,\pm 1,\pm 2,\ldots)$ are determined by the Fourier expansion of the generating function
\begin{equation}\label{generatingS}
S^{(\alpha_i)}(\omega):=\left[1+\frac{\alpha_i}{24}(2-\omega-\omega^{-1})\right](2-\omega-\omega^{-1})^\frac{\alpha_i}{2}=\sum_{k=-\infty}^{\infty}s_{k}^{(\alpha_i)}\omega^k,\quad |\omega|\leq 1.
\end{equation}            
Setting $\omega=e^{{\bf i}\theta}$ with ${\bf i}=\sqrt{-1}$ in Equation (\ref{generatingS}) and using the inverse Fourier transform formula coupling the following formula,
\begin{equation}\label{Riesz_coef}
\frac{1}{2\pi}\int_{-\pi}^{\pi}(2-e^{{\bf i}\theta}-e^{{\bf -i}\theta})^{\frac{\alpha_i}{2}e^{-{\bf i} k\theta}}d\theta=\frac{(-1)^{k}\Gamma(\alpha_i+1)}{\Gamma(\frac{\alpha_i}{2}-k+1)\Gamma(\frac{\alpha_i}{2}+k+1)},
\end{equation}
one gets the explicit expression of the coefficient $s_{k}^{(\alpha_i)}$,
\begin{equation}\label{coesk}
s_{k}^{(\alpha_i)}=\frac{(-1)^{k}\Gamma(\alpha_i+1)}{\Gamma(\frac{\alpha_i}{2}-k+1)\Gamma(\frac{\alpha_i}{2}+k+1)}
\left[1+\frac{\alpha_i(\alpha_i+1)(\alpha_i+2)}{6(\alpha_i-2k+2)(\alpha_i+2k+2)}\right].
\end{equation}
\end{lemma}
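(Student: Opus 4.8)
The plan is to carry out the whole estimate in the Fourier domain, where both the Riesz derivative and the difference operator $\delta_x^{\alpha_i}$ act as pointwise multipliers. Recall that on $\mathbb{R}$ the Riesz derivative has Fourier symbol $-|\eta|^{\alpha_i}$, i.e. $\widehat{(\partial^{\alpha_i}u/\partial|x|^{\alpha_i})}(\eta)=-|\eta|^{\alpha_i}\hat u(\eta)$; the finite-domain definition in \eqref{RSFDEs} coincides with this whole-line multiplier once $u$ is regarded as an element of $\mathcal{L}^{4+\mu}(\mathbb{R})$. The goal is then to compute the symbol of $\delta_x^{\alpha_i}$, compare it with $-|\eta|^{\alpha_i}$, and dominate the discrepancy by $Ch^4$ times a weighted integral of $|\hat u|$.

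First I would compute the symbol of the difference operator. Since the shift satisfies $\widehat{u(\cdot-kh)}(\eta)=e^{\mathbf{i}kh\eta}\hat u(\eta)$, applying the Fourier transform to \eqref{4FCD} term by term (legitimate because the $s_k^{(\alpha_i)}$ are absolutely summable, the Gamma-ratio in \eqref{Riesz_coef} decaying like $|k|^{-(\alpha_i+1)}$) gives
\[
\widehat{\delta_x^{\alpha_i}u}(\eta)=-\frac{1}{h^{\alpha_i}}\Big(\sum_{k=-\infty}^{\infty}s_k^{(\alpha_i)}e^{\mathbf{i}kh\eta}\Big)\hat u(\eta)=-\frac{1}{h^{\alpha_i}}S^{(\alpha_i)}\!\big(e^{\mathbf{i}\eta h}\big)\hat u(\eta),
\]
the last equality being \eqref{generatingS} evaluated at $\omega=e^{\mathbf{i}\eta h}$. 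Putting $\theta=\eta h$ and using $2-e^{\mathbf{i}\theta}-e^{-\mathbf{i}\theta}=4\sin^2(\theta/2)$ turns the symbol into $-h^{-\alpha_i}2^{\alpha_i}\big[1+\tfrac{\alpha_i}{6}\sin^2(\theta/2)\big]|\sin(\theta/2)|^{\alpha_i}$, so the approximation error admits the representation
\[
\frac{\partial^{\alpha_i}u}{\partial|x|^{\alpha_i}}(x)-\delta_x^{\alpha_i}u(x)=\frac{1}{2\pi}\int_{-\infty}^{\infty}|\eta|^{\alpha_i}\big[\Psi(\eta h)-1\big]\hat u(\eta)e^{-\mathbf{i}\eta x}\,\mathrm d\eta,
\]
where $\Psi(\theta):=S^{(\alpha_i)}(e^{\mathbf{i}\theta})/|\theta|^{\alpha_i}=\big[1+\tfrac{\alpha_i}{6}\sin^2(\theta/2)\big]\,|\sin(\theta/2)/(\theta/2)|^{\alpha_i}$.

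The crux is to show $\Psi(\theta)-1=\mathcal{O}(\theta^4)$ as $\theta\to0$. Writing $\xi=\theta/2$ and expanding $(\sin\xi/\xi)^{\alpha_i}=1-\tfrac{\alpha_i}{6}\xi^2+\mathcal{O}(\xi^4)$ against $1+\tfrac{\alpha_i}{6}\sin^2\xi=1+\tfrac{\alpha_i}{6}\xi^2+\mathcal{O}(\xi^4)$, the $\mathcal{O}(\xi^2)$ contributions cancel exactly. This cancellation is precisely the purpose of the correction factor $1+\tfrac{\alpha_i}{24}(2-\omega-\omega^{-1})$ in \eqref{generatingS}: the uncorrected symbol $(2-\omega-\omega^{-1})^{\alpha_i/2}$ alone only yields $\Psi-1=-\tfrac{\alpha_i}{24}\theta^2+\mathcal{O}(\theta^4)$ (second order), and the bracket is engineered to annihilate this leading term. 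Hence $|\Psi(\theta)-1|\le C\theta^4$ near the origin.

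To upgrade this to a global $\mathcal{O}(h^4)$ bound I would split the frequency axis at $|\eta h|=1$. For $|\eta h|\le1$ the Taylor estimate gives $|\eta|^{\alpha_i}|\Psi(\eta h)-1|\le Ch^4|\eta|^{\alpha_i+4}$; for $|\eta h|>1$ the factor $|\sin(\theta/2)/(\theta/2)|^{\alpha_i}$ decays so $\Psi$ stays bounded, whence $|\Psi-1|\le C$ and, using $1\le(h|\eta|)^4$, again $|\eta|^{\alpha_i}|\Psi(\eta h)-1|\le Ch^4|\eta|^{\alpha_i+4}$. Since $\alpha_i<2$, this weight is dominated by $(1+|\eta|)^{4+\mu}$, so
\[
\Big|\frac{\partial^{\alpha_i}u}{\partial|x|^{\alpha_i}}(x)-\delta_x^{\alpha_i}u(x)\Big|\le \frac{Ch^4}{2\pi}\int_{-\infty}^{\infty}(1+|\eta|)^{4+\mu}|\hat u(\eta)|\,\mathrm d\eta,
\]
which is finite and $\mathcal{O}(h^4)$ by the definition of $\mathcal{L}^{4+\mu}(\mathbb{R})$. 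Finally, the explicit coefficients \eqref{coesk} follow by reading off $s_k^{(\alpha_i)}=(1+\tfrac{\alpha_i}{12})g_k-\tfrac{\alpha_i}{24}(g_{k-1}+g_{k+1})$, where $g_k$ is the Gamma-ratio produced by \eqref{Riesz_coef}, and collapsing the three terms via standard Gamma recurrences. I expect the \textbf{main obstacle} to be the uniform large-frequency symbol bound rather than the Taylor cancellation: the series expansion only controls $\Psi$ near $\theta=0$, and one must supply the complementary global estimate and verify that the resulting weight $|\eta|^{\alpha_i+4}$ is integrable against $\hat u$ exactly when $u\in\mathcal{L}^{4+\mu}(\mathbb{R})$.
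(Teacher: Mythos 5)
The paper does not prove this lemma at all: it is imported verbatim from \cite{ding2023high} with only a citation, so there is no in-paper argument to compare against. Your Fourier-multiplier proof is the standard route to such results (and, as far as one can tell, the route taken in the cited source), and it is essentially correct: the symbol computation $\widehat{\delta_x^{\alpha_i}u}(\eta)=-h^{-\alpha_i}S^{(\alpha_i)}(e^{\mathbf{i}h\eta})\hat u(\eta)$, the cancellation of the $\mathcal{O}(\theta^2)$ term between $(\sin\xi/\xi)^{\alpha_i}=1-\tfrac{\alpha_i}{6}\xi^2+\mathcal{O}(\xi^4)$ and $1+\tfrac{\alpha_i}{6}\sin^2\xi$ (which is exactly what the factor $1+\tfrac{\alpha_i}{24}(2-\omega-\omega^{-1})$ is designed to do), the low/high frequency split at $|\eta h|=1$, and the Gamma-recurrence collapse $s_k^{(\alpha_i)}=g_k+\tfrac{\alpha_i}{24}(2g_k-g_{k-1}-g_{k+1})$ all check out (I verified the last one reproduces \eqref{coesk} exactly). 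Two small points deserve care. First, your final domination $|\eta|^{\alpha_i+4}\le C(1+|\eta|)^{4+\mu}$ requires $\mu\ge\alpha_i$, not merely $\alpha_i<2$; the paper never defines $\mu$, but the lemma is only true under such a compatibility condition (in the source the exponent is effectively $4+\alpha_i$), so you should state this hypothesis explicitly rather than attribute the domination to $\alpha_i<2$. Second, identifying the bounded-domain Riesz derivative of \eqref{RSFDEs} with the whole-line multiplier $-|\eta|^{\alpha_i}$ presupposes the usual zero extension of $u$ outside $\Omega$ consistent with the homogeneous Dirichlet condition; this is the standing convention in this literature but is worth a sentence. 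Neither point is a genuine gap in the analytic core of your argument.
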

It is noted that the coefficients $s_{k}^{(\alpha_i)}$ satisfy the following properties:
\begin{proposition}{\rm (\cite{ding2023high})}\label{coef_prop}
For all $1<\alpha_i<2$, and let $s_{k}^{(\alpha_i)}$ be defined as (\ref{coesk}). Then, we have
\begin{description}
  \item[(i)]  $s_{k}^{(\alpha_i)}=s_{-k}^{(\alpha_i)}$ \mbox{for all} $k\geq 1$;
  \item[(ii)]  $s_{0}^{(\alpha_i)}>0$,\,$s_{\pm 1}^{(\alpha_i)}<0$;
   \item[(iii)]
   $s_{\pm 2}^{(\alpha_i)}\left\{
                               \begin{array}{ll}
                                 \leq 0, \,& \hbox{$\alpha\in(1,\alpha_i^{*}]$,} \\
                                 \geq 0, \,& \hbox{$\alpha\in[\alpha_i^{*},2)$,}
                               \end{array}
                             \right.$
   where $\alpha_i^{*}\approx 1.6516$;
    \item[(iv)] $s_{\pm k}^{(\alpha_i)}\leq0$, $k=3, 4,\ldots$; \mbox{and}
    \item[(v)]  $\sum\limits_{k=-\infty}^{\infty}s_{k}^{(\alpha_i)}=0.$
\end{description}
\end{proposition}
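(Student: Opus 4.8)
The plan is to derive all five properties directly from the closed form \eqref{coesk}, disposing of the symmetry and the summation identity first and then carrying out a factor-by-factor sign analysis for the remaining three claims.

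\textbf{Symmetry and the zero-sum.} Property (i) will follow by inspection of \eqref{coesk}: the factor $(-1)^k$, the product $\Gamma(\frac{\alpha_i}{2}-k+1)\Gamma(\frac{\alpha_i}{2}+k+1)$, and the product $(\alpha_i-2k+2)(\alpha_i+2k+2)$ are each invariant under $k\mapsto -k$, so $s_k^{(\alpha_i)}=s_{-k}^{(\alpha_i)}$. For (v) I would not use \eqref{coesk} at all but rather the generating function \eqref{generatingS}: evaluating at $\omega=1$ makes $2-\omega-\omega^{-1}=0$, and since $\alpha_i/2>0$ this forces $\sum_{k}s_k^{(\alpha_i)}=S^{(\alpha_i)}(1)=0$. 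Both steps are routine and carry no obstruction.

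\textbf{Sign of the Gamma prefactor.} By (i) it suffices to treat $k\geq 0$. I would write $s_k^{(\alpha_i)}=P_k B_k$, where $P_k=\frac{(-1)^k\Gamma(\alpha_i+1)}{\Gamma(\frac{\alpha_i}{2}-k+1)\Gamma(\frac{\alpha_i}{2}+k+1)}$ and $B_k$ is the bracketed factor. Since $\alpha_i+1>0$ and $\frac{\alpha_i}{2}+k+1>0$, the factors $\Gamma(\alpha_i+1)$ and $\Gamma(\frac{\alpha_i}{2}+k+1)$ are positive, so the whole prefactor sign is governed by $\Gamma(\frac{\alpha_i}{2}-k+1)$. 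Because $\frac{\alpha_i}{2}\in(\frac{1}{2},1)$, this argument lies in $(\frac{3}{2},2)$ for $k=0$ and in $(\frac{1}{2},1)$ for $k=1$ (both positive), while for $k\geq 2$ it lies in $(-(k-1),-(k-2))$; repeatedly applying $\Gamma(x+1)=x\,\Gamma(x)$ then gives $\mathrm{sign}\,\Gamma(\frac{\alpha_i}{2}-k+1)=(-1)^{k-1}$ there. Combining the three factors yields $P_k>0$ for $k=0$ and $P_k<0$ for every $k\geq 1$.

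\textbf{Sign of the bracket and conclusion.} For $k\in\{0,1\}$ the denominator $(\alpha_i-2k+2)(\alpha_i+2k+2)$ is positive, so $B_k>1>0$, and with the prefactor signs this gives $s_0^{(\alpha_i)}>0$, $s_1^{(\alpha_i)}<0$, which is (ii). For $k\geq 3$ one has $\alpha_i-2k+2<0<\alpha_i+2k+2$, so the correction fraction is negative, yet its modulus is uniformly below $1$: the numerator satisfies $\alpha_i(\alpha_i+1)(\alpha_i+2)<24$, while $6(\alpha_i+2-2k)(\alpha_i+2+2k)=6\bigl((\alpha_i+2)^2-4k^2\bigr)$ has modulus at least $6(4\cdot 9-16)=120$; hence $B_k>0$ and $s_k^{(\alpha_i)}=P_kB_k<0$, proving (iv). The genuinely delicate case is $k=2$, where $\alpha_i-2k+2=\alpha_i-2<0$ lets $B_2$ change sign; here I would clear denominators to obtain
\[
B_2=\frac{\alpha_i^3+9\alpha_i^2+26\alpha_i-72}{6(\alpha_i-2)(\alpha_i+6)},
\]
whose denominator is negative on $(1,2)$, while the numerator $g(\alpha_i):=\alpha_i^3+9\alpha_i^2+26\alpha_i-72$ has $g'(\alpha_i)=3\alpha_i^2+18\alpha_i+26>0$, so $g$ is strictly increasing with a unique root $\alpha_i^*\approx 1.6516$ in $(1,2)$ (since $g(1)=-36<0$ and $g(2)=24>0$). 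Thus $B_2\geq 0$ exactly when $\alpha_i\leq\alpha_i^*$, and as $P_2<0$ we conclude $s_2^{(\alpha_i)}\leq 0$ on $(1,\alpha_i^*]$ and $s_2^{(\alpha_i)}\geq 0$ on $[\alpha_i^*,2)$, establishing (iii). The only real obstacle is this $k=2$ step—identifying $\alpha_i^*$ as the root of the cubic $g$ and proving its monotonicity—together with the careful bookkeeping of the Gamma sign on negative non-integer arguments used in the prefactor analysis.
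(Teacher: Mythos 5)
Your proof is correct, but there is nothing in the paper to compare it against: Proposition \ref{coef_prop} is stated as an imported result, credited to \cite{ding2023high}, and the paper supplies no proof of its own. What you have written is therefore a self-contained verification that the paper omits. Your sign bookkeeping checks out: the prefactor sign reduces to the sign of $\Gamma(\frac{\alpha_i}{2}-k+1)$, which is $(-1)^{k-1}$ for $k\geq 2$ because the argument lies in the interval $(-(k-1),-(k-2))$, giving $P_0>0$ and $P_k<0$ for all $k\geq1$; the bound $\alpha_i(\alpha_i+1)(\alpha_i+2)<24$ against $6(4k^2-(\alpha_i+2)^2)\geq 120$ for $k\geq3$ correctly forces $B_k>0$ there; and your reduction of the $k=2$ case to the monotone cubic $g(\alpha_i)=\alpha_i^3+9\alpha_i^2+26\alpha_i-72$, with $g(1)=-36$, $g(2)=24$, $g'>0$, cleanly identifies $\alpha_i^*$ as its unique root and yields the sign change in (iii). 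The only step you leave implicit is in (v): equating $\sum_k s_k^{(\alpha_i)}$ with $S^{(\alpha_i)}(1)$ requires that the Fourier series converge at $\omega=1$, which holds because $|s_k^{(\alpha_i)}|=\mathcal{O}(|k|^{-1-\alpha_i})$ makes the series absolutely convergent; one sentence to that effect would close the argument, but it is a routine point rather than a genuine gap.
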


To derive the discretization for (\ref{RSFDEs}), we first introduce some basic notation, which is necessary to carry out the theoretical analysis. For any nonnegative integer $m$, $n$ with $m\leq n$, define the set $m\wedge n:=\{m,m+1,...,n-1,n\}$. 
Denote
\begin{equation*}
	m_1^{-}=m_d^{+}=1,\quad m_i^{-}=\prod\limits_{j=1}^{i-1}m_i,~i\in 2\wedge d,\quad  m_k^{+}=\prod\limits_{j=k+1}^{d}m_j,~k\in 1\wedge (d-1).
\end{equation*}

Let $\Delta t=T/M$ be the size of the time step with a positive integer $M$. We define the temporal partition $t_m=m\Delta t$ for $m=0\wedge M$.
Let $h_i=(\hat{a}_i-\check{a}_i)/(n_i+1)$ be the size of the spatial step 
with positive integers $n_i$. Let $\mathbb{Z}$ be the set of all integer numbers. For $i=1\wedge d$, define ${\mathbb{K}_i}=\{i\in{\mathbb{Z}|i=1\wedge n_i}\}$, ${\mathbb{\hat{K}}_i}=\{i\in{\mathbb{Z}|i=0\wedge n_i+1}\}$, ${\mathbb{L}}={\mathbb{K}_1}\times{\mathbb{K}_2}\times\cdots\times{\mathbb{K}_d}$, ${\mathbb{\hat{L}}}={\mathbb{\hat{K}}_1}\times{\mathbb{\hat{K}}_2}\times\cdots\times{\mathbb{\hat{K}}_d}$, $\partial{\mathbb{L}}={\mathbb{\hat{L}}}\setminus  {\mathbb{L}}$. Set a multi-index ${\boldsymbol J}=(j_1,j_2,\ldots,j_d)$, and denote by ${\boldsymbol x_J}=(x_{1,j_1},x_{2,j_2},\ldots,x_{d,j_d})=(\check{a}_1+j_1h_1,\check{a}_2+j_2h_2,\ldots,\check{a}_d+j_dh_d)$ the grid node. Then define $U_{\boldsymbol J}^m=U({\boldsymbol x_J},t_m)$, $r_{\boldsymbol J}^{m+\frac{1}{2}}=r({\boldsymbol x_J},t_{m+\frac{1}{2}})$ and $f_{\boldsymbol J}^{m+\frac{1}{2}}=f({\boldsymbol x_J},t_{m+\frac{1}{2}})$, respectively.

Based on Lemma \ref{1Doperator}, we can extend the 4FCD formula (\ref{4FCD}) to discretize the multi-dimensional Riesz fractional derivative. Hence, the discrete
formula of $\frac{\partial^{\alpha_i}u({\boldsymbol x_J},t)}{\partial\left|x_i\right|^{\alpha_i}}$with respect to the $i$-th variable $x_i$ at point $({\boldsymbol {x_J},t_{m+\frac{1}{2}}})$ is expressed as
\begin{equation}\label{spacediscretization}
\frac{\partial^{\alpha_i}U(\boldsymbol{x_J},t_{m+\frac{1}{2}})}{\partial\left|x_i\right|^{\alpha_i}}=\delta_{x_i}^{\alpha_i}U_{{\boldsymbol J}}^{m+\frac{1}{2}}+\mathcal{O}(h_i^4):=-\frac{1}{h_i^{\alpha_i}}\sum_{k=-\infty}^{\infty}s_{k}^{(\alpha_i)}
U_{\boldsymbol{J}_i-k}^{m+\frac{1}{2}}+\mathcal{O}(h_i^4),
\end{equation}
where ${\boldsymbol{J}_i-k}=(j_1,\ldots,j_{i-1},j_i-k,j_{i+1},\ldots,j_d)$ are the multi-index, and $s_{k}^{(\alpha_i)}\, (i=1\wedge d)$ satisfy the properties in Proposition \ref{coef_prop}. Next, applying (\ref{spacediscretization}) to Equation (\ref{RSFDEs}) at the point 
$({\boldsymbol x_J},t_{m+\frac{1}{2}})$, 
we have
\begin{equation}
r_{{\boldsymbol J}}^{m+\frac{1}{2}}\frac{\partial U_{{\boldsymbol J}}^{m+\frac{1}{2}}}{\partial t}=\sum_{i=1}^d K_i \delta_{x_i}^{\alpha_i}U_{{\boldsymbol J}}^{m+\frac{1}{2}}+f_{{\boldsymbol J}}^{m+\frac{1}{2}}+R_{{\boldsymbol J}}^{m+\frac{1}{2}},
\end{equation}
where there exists a positive constant $c_1$ such that
\begin{equation}
\Big|R_{{\boldsymbol J}}^{m+\frac{1}{2}}\Big| \leq c_1 (h_1^4+h_2^4+\cdots+h_d^4).
\end{equation}
Using $\frac{U_{{\boldsymbol J}}^{m+1}-U_{{\boldsymbol J}}^{m}}{\Delta t}$ to approximate $\frac{\partial U_{{\boldsymbol J}}^{m+\frac{1}{2}}}{\partial t}$, we can derive
\begin{equation}\label{numerical_scheme}
  r_{{\boldsymbol J}}^{m+\frac{1}{2}}\frac{U_{{\boldsymbol J}}^{m+1}-U_{{\boldsymbol J}}^{m}}{\Delta t}= \sum_{i=1}^d K_i \delta_{x_i}^{\alpha_i}U_{{\boldsymbol J}}^{m+\frac{1}{2}}+f_{{\boldsymbol J}}^{m+\frac{1}{2}}+R_{{\boldsymbol J}}^{m+\frac{1}{2}}+{\hat R}_{{\boldsymbol J}}^{m+\frac{1}{2}},
\end{equation}
where $\Big|{\hat R}_{{\boldsymbol J}}^{m+\frac{1}{2}}\Big|\leq c_2 \Delta t^2$ with $c_2$ being a positive constant. Omitting the small terms ${R}_{{\boldsymbol J}}^{m+\frac{1}{2}}$ and ${\hat R}_{{\boldsymbol J}}^{m+\frac{1}{2}}$, and replacing the exact solution $U_{{\boldsymbol J}}^{m+\frac{1} {2}}$ with its numerical solution $u_{{\boldsymbol J}}^{m+\frac{1} {2}}$, the CN-4FCD scheme for (\ref{RSFDEs}) can be written as 
\begin{equation}\label{4thscheme_operator}
  r_{{\boldsymbol J}}^{m+\frac{1}{2}}\frac{u_{{\boldsymbol J}}^{m+1}-u_{{\boldsymbol J}}^{m}}{\Delta t}=\sum_{i=1}^d K_i \delta_{x_i}^{\alpha_i}u_{{\boldsymbol J}}^{m+\frac{1}{2}}+f_{{\boldsymbol J}}^{m+\frac{1}{2}}.  
\end{equation}
In the following, we will study the stability and convergence of the CN-4FCD scheme. 
For simplicity, we define the vector consisting of spatial grid points with $j_1$-dominant ordering
\begin{equation}
{{\bf P}_d=\Big(P_{1,1,\ldots,1,},\ldots,P_{n_1,1,\ldots,1},\ldots,P_{1,n_2,\ldots,n_d},\ldots,P_{n_1,n_2,\ldots,n_d}\Big)^T},
\end{equation}
where $P_{j_1,j_2,\ldots,j_d}$ denotes the grid points $(x_{1,j_1},x_{2,j_2},\ldots,x_{d,j_d})$, $j_i=1\wedge n_i$. Let ${\bf u}^{m}\approx U({\bf P}_d,t_m)$, $D^{m+\frac{1}{2}}={\rm diag}(r({\bf P}_d,t_{m+\frac{1}{2}}))$, and ${\bf f}^{m+\frac{1}{2}}=f({\bf P}_d,t_{m+\frac{1}{2}})$. Then, the matrix form of the scheme (\ref{4thscheme_operator}) is written as 
\begin{equation}\label{matrix-form}
(D^{m+\frac{1}{2}}+T){\bf u}^{m+1}=(D^{m+\frac{1}{2}}-T){\bf u}^{m}+\Delta t {\bf f}^{m+\frac{1}{2}},\quad m=0\wedge M-1,
\end{equation}
where 
\begin{equation}\label{multilevel_T}
T=\sum_{i=1}^d \eta_i I_{n_i^{-}} \otimes S_{n_i}^{\left(\alpha_i\right)} \otimes I_{n_i^{+}}
\end{equation}
with
\begin{equation*}
\eta_i=\frac{K_i \Delta t}{2h_i^{\alpha_i}}>0
\end{equation*}
and
\begin{eqnarray}\label{Toeplitz-Walpha}
S_{n_i}^{\left(\alpha_i\right)}=\left[\begin{array}{ccccc}
s_0^{(\alpha_i)} & s_{-1}^{(\alpha_i)} & \cdots & s_{2-n_i}^{(\alpha_i)} & s_{1-n_i}^{(\alpha_i)}\\
s_{1}^{(\alpha_i)} & s_0^{(\alpha_i)} & s_{-1}^{(\alpha_i)} & \ddots & s_{2-n_i}^{(\alpha_i)}\\
\vdots & s_{1}^{(\alpha_i)} & s_0^{(\alpha_i)} & \ddots & \vdots\\
s_{n_i-2}^{(\alpha_i)} & \ddots & \ddots & \ddots & s_{-1}^{(\alpha_i)}\\
s_{n_i-1}^{(\alpha_i)} & s_{n_i-2}^{(\alpha_i)} & \cdots & s_{1}^{(\alpha_i)} & s_0^{(\alpha_i)}
\end{array}
\right].
\end{eqnarray}

It follows from \cite{ding2023high} that the generating function $S^{(\alpha_i)}(e^{{\bf i}\theta})$ of the Toeplitz matrix $S_{n_i}^{\left(\alpha_i\right)}$ is 
nonnegative for $\theta \in [-\pi, \pi]$ and $1<\alpha_i<2$. Obviously, $S^{(\alpha_i)}(e^{{\bf i}\theta})$ is not identically zero. By Grenander–Szegö Theorem \cite{chan2007introduction}, we can conclude that the matrix $S_{n_i}^{\left(\alpha_i\right)}$ is SPD, which, combining with the fact $\eta_i>0$ and the properties of the Kronecker product, implies that the matrix $T$ defined in (\ref{multilevel_T}) is also SPD.


Multiplying both sides of (\ref{matrix-form}) by $(D^{m+\frac{1}{2}})^{-1}$ gives
\begin{eqnarray}\label{matrix-formU}
\Big(I+(D^{m+\frac{1}{2}})^{-1}T\Big){\bf u}^{m+1}=\Big(I-(D^{m+\frac{1}{2}})^{-1}T\Big){\bf u}^{m}+\Delta t (D^{m+\frac{1}{2}})^{-1}{\bf f}^{m+\frac{1}{2}}.
\end{eqnarray}
Now, we proceed to establish the stability theorem as follows:
\begin{theorem}\label{stability theorem}
{\rm (Stability)} The finite difference scheme (\ref{matrix-formU}) is unconditionally stable.
\end{theorem}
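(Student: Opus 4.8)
The plan is to prove stability by an energy argument, but with the decisive twist of measuring the discrete energy in the norm induced by the \emph{time-independent} matrix $T$ rather than by the time-dependent diagonal matrix $D^{m+\frac{1}{2}}$. I would use only the two facts already at hand: $T$ is symmetric positive definite (SPD), and $D^{m+\frac{1}{2}}$ is a positive diagonal matrix, hence symmetric and positive semidefinite. First I would recast the scheme in additive form: multiplying (\ref{matrix-formU}) on the left by $D^{m+\frac{1}{2}}$ recovers (\ref{matrix-form}), which rearranges to
\begin{equation}\nonumber
D^{m+\frac{1}{2}}\big({\bf u}^{m+1}-{\bf u}^{m}\big)+T\big({\bf u}^{m+1}+{\bf u}^{m}\big)=\Delta t\,{\bf f}^{m+\frac{1}{2}}.
\end{equation}

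The crucial move is to take the (real) Euclidean inner product of this identity with the \emph{difference} ${\bf u}^{m+1}-{\bf u}^{m}$, not the sum. Since $T$ is symmetric the cross terms cancel and the $T$-block telescopes,
\begin{equation}\nonumber
\big({\bf u}^{m+1}-{\bf u}^{m}\big)^{T}T\big({\bf u}^{m+1}+{\bf u}^{m}\big)=({\bf u}^{m+1})^{T}T{\bf u}^{m+1}-({\bf u}^{m})^{T}T{\bf u}^{m},
\end{equation}
while the $D^{m+\frac{1}{2}}$-block is the manifestly nonnegative quantity $({\bf u}^{m+1}-{\bf u}^{m})^{T}D^{m+\frac{1}{2}}({\bf u}^{m+1}-{\bf u}^{m})\ge 0$, for which only $D^{m+\frac{1}{2}}\succeq 0$ is needed. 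Writing $\|{\bf v}\|_T^2:={\bf v}^{T}T{\bf v}$ (a genuine norm because $T$ is SPD), the homogeneous case ${\bf f}^{m+\frac{1}{2}}={\bf 0}$ gives at once $\|{\bf u}^{m+1}\|_T^2-\|{\bf u}^{m}\|_T^2=-({\bf u}^{m+1}-{\bf u}^{m})^{T}D^{m+\frac{1}{2}}({\bf u}^{m+1}-{\bf u}^{m})\le 0$, hence $\|{\bf u}^{m}\|_T\le\|{\bf u}^{0}\|_T$ for all $m$ with no restriction whatsoever on $\Delta t$ or the $h_i$. To include the source term I would retain the right-hand side $\Delta t\,({\bf u}^{m+1}-{\bf u}^{m})^{T}{\bf f}^{m+\frac{1}{2}}$, bound it by Cauchy--Schwarz and Young's inequality, and absorb the resulting multiple of $\|{\bf u}^{m+1}-{\bf u}^{m}\|^2$ into the nonnegative $D^{m+\frac{1}{2}}$-term (which dominates it since $r\ge\check r>0$); summing over $m$ then yields a bound of the form $\|{\bf u}^{m}\|_T^2\le\|{\bf u}^{0}\|_T^2+C\sum_k\Delta t\,\|{\bf f}^{k+\frac{1}{2}}\|^2$.

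The main obstacle is exactly the choice of energy, and I expect this to be the delicate point rather than any calculation. The two ``obvious'' approaches both fail: testing with the sum ${\bf u}^{m+1}+{\bf u}^{m}$ makes the $D^{m+\frac{1}{2}}$-block telescope instead, but it does so in the \emph{time-varying} weighted norm $\|\cdot\|_{D^{m+\frac{1}{2}}}$, whose weight changes at every step, so chaining the one-step estimates introduces a spurious factor $\hat r/\check r$ per step and blows up geometrically as $\Delta t\to 0$; and one cannot work in the plain Euclidean norm either, because the amplification matrix $(D^{m+\frac{1}{2}}+T)^{-1}(D^{m+\frac{1}{2}}-T)$ is non-symmetric and its spectral norm can exceed $1$ (even though, being similar to a symmetric matrix, its spectral radius stays below $1$). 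Testing with the difference and measuring energy in the fixed, time-independent $T$-norm simultaneously eliminates the moving weight and uses $D^{m+\frac{1}{2}}\succeq 0$ with the favourable sign, which is what delivers the \emph{unconditional} bound.
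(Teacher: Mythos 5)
Your proof is correct, but it follows a genuinely different route from the paper's own argument for this theorem. The paper proves stability by a spectral-radius computation on the one-step amplification matrix: it considers a perturbed solution, derives the error propagation $\boldsymbol{\epsilon}^{m+1}=\bigl(I+(D^{m+\frac{1}{2}})^{-1}T\bigr)^{-1}\bigl(I-(D^{m+\frac{1}{2}})^{-1}T\bigr)\boldsymbol{\epsilon}^{m}$, notes that $(D^{m+\frac{1}{2}})^{-1}T$ is similar to the SPD matrix $(D^{m+\frac{1}{2}})^{-\frac{1}{2}}T(D^{m+\frac{1}{2}})^{-\frac{1}{2}}$ so that all its eigenvalues are positive, and concludes that every eigenvalue of the amplification matrix has modulus less than one. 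Your energy argument --- testing the additive form of the scheme with the difference ${\bf u}^{m+1}-{\bf u}^{m}$, telescoping the $T$-quadratic form, and discarding the nonnegative $D^{m+\frac{1}{2}}$-term --- is precisely the technique the paper deploys later, not for stability but for the \emph{convergence} theorem (compare Equations (\ref{erroreq2})--(\ref{estimation5})). Your approach arguably buys more: as you correctly observe, the amplification matrices are non-normal and change with $m$, so a per-step spectral radius below one does not by itself bound the norm of their product over many steps (the natural contraction norm $\|\cdot\|_{D^{m+\frac{1}{2}}}$ drifts with $m$), whereas your fixed $T$-norm yields a uniform, genuinely unconditional bound $\|{\bf u}^{m}\|_T\le\|{\bf u}^{0}\|_T$ plus a source contribution. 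The paper's argument is shorter and is the conventional way such stability statements are justified in this literature, but yours is the more robust of the two; the only cosmetic point is that to translate the $T$-norm bound into the discrete $L^2$-norm one invokes the coercivity estimate of Lemma \ref{Fractional Norm Equivalence}, exactly as the paper does in its convergence proof.
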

\begin{proof}
Suppose ${\bf v}^{m+1}$ is a solution to the following difference scheme
\begin{eqnarray}\label{matrix-formV}
\Big(I+(D^{m+\frac{1}{2}})^{-1}T\Big){\bf v}^{m+1}=\Big(I-(D^{m+\frac{1}{2}})^{-1}T\Big){\bf v}^{m}+\Delta t (D^{m+\frac{1}{2}})^{-1}{\bf f}^{m+\frac{1}{2}}.
\end{eqnarray}
Denote 
$$
{\boldsymbol{\epsilon}}^{m+1}={\bf u}^{m+1}-{\bf v}^{m+1}.
$$
Subtracting (\ref{matrix-formV}) from (\ref{matrix-formU}) leads to the following perturbation system
\begin{equation}\label{matrix-formepsilon}
\Big(I+(D^{m+\frac{1}{2}})^{-1}T\Big){\boldsymbol{\epsilon}}^{m+1}
=\Big(I-(D^{m+\frac{1}{2}})^{-1}T\Big){\boldsymbol{\epsilon}}^{m},
\end{equation}
i.e.,
\begin{equation}\label{matrix-formepsilon2}
{\boldsymbol{\epsilon}}^{m+1}
=\Big(I+(D^{m+\frac{1}{2}})^{-1}T\Big)^{-1}\Big(I-(D^{m+\frac{1}{2}})^{-1}T\Big){\boldsymbol{\epsilon}}^{m}.
\end{equation}
Since 
\begin{equation}\label{similar_matrix}
(D^{m+\frac{1}{2}})^{\frac{1}{2}}\Big((D^{m+\frac{1}{2}})^{-1}T\Big)(D^{m+\frac{1}{2}})^{-\frac{1}{2}}=(D^{m+\frac{1}{2}})^{-\frac{1}{2}}T(D^{m+\frac{1}{2}})^{-\frac{1}{2}},
\end{equation}
we know that $(D^{m+\frac{1}{2}})^{-1}T$ and $(D^{m+\frac{1}{2}})^{-\frac{1}{2}}T(D^{m+\frac{1}{2}})^{-\frac{1}{2}}$ are similar. In addition, the matrix $T$ is SPD implies that $(D^{m+\frac{1}{2}})^{-\frac{1}{2}}T(D^{m+\frac{1}{2}})^{-\frac{1}{2}}$ is also SPD, which means that ${\lambda((D^{m+\frac{1}{2}})^{-1}T)}>0$ and 
\begin{equation}\label{eig_less_1}
\Big|\Big(1+\lambda((D^{m+\frac{1}{2}})^{-1}T)\Big)^{-1}\Big(1-\lambda((D^{m+\frac{1}{2}})^{-1}T)\Big)\Big|<1
\end{equation}
for any eigenvalue $\lambda$ of $(D^{m+\frac{1}{2}})^{-1}T$. Note that
$\Big(1+\lambda((D^{m+\frac{1}{2}})^{-1}T)\Big)^{-1}\Big(1-\lambda((D^{m+\frac{1}{2}})^{-1}T)\Big)$ is an eigenvalue of the matrix $\Big(I+(D^{m+\frac{1}{2}})^{-1}T\Big)^{-1}\Big(I-(D^{m+\frac{1}{2}})^{-1}T\Big)$. (\ref{eig_less_1}) indicates that the spectral radius of the matrix $\Big(I+(D^{m+\frac{1}{2}})^{-1}T\Big)^{-1}\Big(I-(D^{m+\frac{1}{2}})^{-1}T\Big)$ is less than 1. Therefore, the scheme (\ref{matrix-formU}) is unconditionally stable. The proof is complete.
\end{proof}

 Before discussing the convergence of the CN-4FCD scheme,
we first introduce the concepts of inner product and the norm of the grid functions in 
\begin{equation}
    U_h:=\Big\{{\bf u}|{\bf u}=\Big(u_{1,1,\ldots,1,},\ldots,u_{n_1,1,\ldots,1},\ldots,u_{1,n_2,\ldots,n_d},\ldots,u_{n_1,n_2,\ldots,n_d}\Big)^T\Big\}.
\end{equation}
For ${\bf u}, {\bf v}\in U_h$,  we define
\begin{equation}
   \langle {\bf u},{\bf v}\rangle:=\prod_{i=1}^d h_i\sum_{\boldsymbol J\in {\mathbb L}}{\bf u}_{\boldsymbol J}{\bf v}_{\boldsymbol J},
\end{equation}
in particular,
\begin{equation} ||{\bf u}||^2=\langle {\bf u},{\bf u}\rangle, \quad ||{\bf u}||:=\sqrt{\prod_{i=1}^d h_i\sum_{\boldsymbol J\in {\mathbb L}}|{\bf u}_{\boldsymbol J}|^2},\quad ||{\bf u}||_2:=\sqrt{\sum_{\boldsymbol J\in {\mathbb L}}|{\bf u}_{\boldsymbol J}|^2},
\end{equation}
where $||\cdot||$ and $||\cdot||_2$ denote the discrete $L^2$-norm and $2$-norm for a given vector, respectively. 

Then, we establish the following lemma, which significantly contributes to the theoretical analysis of the convergence of the CN-4FCD scheme (\ref{4thscheme_operator}). 
\begin{lemma}\label{Fractional Norm Equivalence}
If $1<\alpha_i<2\,(i=1 \wedge d)$, for any grid function ${\bf u}\in U_h$, there exists a positive constant $C$ independent of $h_i$ such that
\begin{equation}
 C K_{\min} \Delta t||{\bf u}||^2\leq  \langle T{\bf u},{\bf u} \rangle,
\end{equation}
where 
\begin{equation}\label{definitionforC}
C= \Big(1-\frac{1}{\pi}\Big)\Big(\frac{2}{\pi}\Big)^{\alpha_{\max}}\sum_{i=1}^{d}\frac{1}{2(\hat{a}_i-\check{a}_i)^{\alpha_i}},
\end{equation}
with 
$$
K_{\min}=\min_{i=1\wedge d}\{K_i\}, \, \alpha_{\max}=\max_{i=1\wedge d}\{\alpha_i\}.
$$
\end{lemma}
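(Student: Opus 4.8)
The plan is to reduce the $d$-level quadratic form to a single-level eigenvalue estimate, and then bound the smallest eigenvalue of each $S_{n_i}^{(\alpha_i)}$ from below by an explicit mesh-dependent quantity that, once multiplied by $\eta_i$, collapses to the claimed constant. First I would exploit the weights in the inner product, $\langle T{\bf u},{\bf u}\rangle=\big(\prod_{j=1}^d h_j\big){\bf u}^T T{\bf u}$ and $\|{\bf u}\|^2=\big(\prod_{j=1}^d h_j\big)\|{\bf u}\|_2^2$, together with $T=\sum_i\eta_i\,I_{n_i^-}\otimes S_{n_i}^{(\alpha_i)}\otimes I_{n_i^+}$. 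Since each Kronecker factor is symmetric and the identity blocks contribute eigenvalue $1$, the eigenvalues of $I_{n_i^-}\otimes S_{n_i}^{(\alpha_i)}\otimes I_{n_i^+}$ are exactly those of $S_{n_i}^{(\alpha_i)}$, so ${\bf u}^T(I_{n_i^-}\otimes S_{n_i}^{(\alpha_i)}\otimes I_{n_i^+}){\bf u}\ge\lambda_{\min}\!\big(S_{n_i}^{(\alpha_i)}\big)\|{\bf u}\|_2^2$. It therefore suffices to prove the single-level bound $\lambda_{\min}(S_{n_i}^{(\alpha_i)})\ge(1-\tfrac1\pi)(\tfrac2\pi)^{\alpha_i}(n_i+1)^{-\alpha_i}$; summing the $\eta_i$-weighted contributions and then using $K_i\ge K_{\min}$ and $(\tfrac2\pi)^{\alpha_i}\ge(\tfrac2\pi)^{\alpha_{\max}}$ (valid since $\tfrac2\pi<1$) will reproduce $C$ exactly.

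For the single-level bound I would pass to the generating function. Writing a generic vector ${\bf w}\in\mathbb{R}^{n_i}$ and $q(\theta)=\sum_{k=1}^{n_i}w_k e^{{\bf i}k\theta}$, and using $s_m^{(\alpha_i)}=\frac{1}{2\pi}\int_{-\pi}^{\pi}S^{(\alpha_i)}(e^{{\bf i}\theta})e^{-{\bf i}m\theta}\,{\rm d}\theta$, gives the Parseval-type identity ${\bf w}^T S_{n_i}^{(\alpha_i)}{\bf w}=\frac{1}{2\pi}\int_{-\pi}^{\pi}S^{(\alpha_i)}(e^{{\bf i}\theta})|q(\theta)|^2\,{\rm d}\theta$, with $\|{\bf w}\|_2^2=\frac{1}{2\pi}\int_{-\pi}^{\pi}|q(\theta)|^2\,{\rm d}\theta$. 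Because $2-e^{{\bf i}\theta}-e^{-{\bf i}\theta}=4\sin^2(\theta/2)\ge0$, the bracket in (\ref{generatingS}) is $\ge1$, whence $S^{(\alpha_i)}(e^{{\bf i}\theta})\ge(4\sin^2(\theta/2))^{\alpha_i/2}$; Jordan's inequality $|\sin(\theta/2)|\ge|\theta|/\pi$ on $[-\pi,\pi]$ then yields the key pointwise lower bound $S^{(\alpha_i)}(e^{{\bf i}\theta})\ge(\tfrac2\pi)^{\alpha_i}|\theta|^{\alpha_i}$.

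It remains to show that $|q|^2$ cannot concentrate too strongly near $\theta=0$, which is precisely where the factor $1-\tfrac1\pi$ is produced. Splitting the integral at the threshold $\theta_0=1/(n_i+1)$ and discarding the inner part, $\int_{-\pi}^{\pi}|\theta|^{\alpha_i}|q|^2\,{\rm d}\theta\ge\theta_0^{\alpha_i}\big(\int_{-\pi}^{\pi}|q|^2-\int_{|\theta|<\theta_0}|q|^2\big)$. The pointwise Cauchy--Schwarz estimate $|q(\theta)|^2\le n_i\|{\bf w}\|_2^2$ (valid since $q$ is a sum of $n_i$ unimodular terms) bounds the inner mass by $\int_{|\theta|<\theta_0}|q|^2\le 2\theta_0 n_i\|{\bf w}\|_2^2=\tfrac{\theta_0 n_i}{\pi}\int_{-\pi}^{\pi}|q|^2$, and $\theta_0 n_i=n_i/(n_i+1)<1$ makes this fraction strictly below $1/\pi$, so that $1-\tfrac{\theta_0 n_i}{\pi}>1-\tfrac1\pi$. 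Combining the three estimates gives $\lambda_{\min}(S_{n_i}^{(\alpha_i)})\ge(1-\tfrac1\pi)(\tfrac2\pi)^{\alpha_i}(n_i+1)^{-\alpha_i}$. Finally, with $\eta_i=K_i\Delta t/(2h_i^{\alpha_i})$ and $h_i=(\hat a_i-\check a_i)/(n_i+1)$, the product $\eta_i\lambda_{\min}(S_{n_i}^{(\alpha_i)})$ loses its $(n_i+1)$-dependence and leaves $\tfrac{K_i\Delta t}{2}(1-\tfrac1\pi)(\tfrac2\pi)^{\alpha_i}(\hat a_i-\check a_i)^{-\alpha_i}$; summation over $i$ and the reductions above deliver $\langle T{\bf u},{\bf u}\rangle\ge C K_{\min}\Delta t\|{\bf u}\|^2$ with $C$ as in (\ref{definitionforC}).

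The main obstacle is the third step. Since the symbol vanishes at $\theta=0$ like $|\theta|^{\alpha_i}$, the smallest eigenvalue of $S_{n_i}^{(\alpha_i)}$ genuinely tends to $0$, so no $h$-independent lower bound on $S_{n_i}^{(\alpha_i)}$ by itself can hold; the argument must instead quantify exactly how little $L^2$-mass of an $n_i$-term exponential sum can reside in an $O(1/n_i)$ neighbourhood of the origin. Choosing the threshold $\theta_0=1/(n_i+1)$ is what allows the $h_i^{-\alpha_i}$ scaling in $\eta_i$ to cancel the $(n_i+1)^{-\alpha_i}$ decay and yield a constant $C$ independent of all mesh sizes, and it is also what pins down the explicit value $1-\tfrac1\pi$; any cruder concentration estimate would either fail to cancel the mesh dependence or weaken this constant.
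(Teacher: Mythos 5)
Your proposal is correct and follows essentially the same route as the paper's proof: reduce to $\lambda_{\min}(S_{n_i}^{(\alpha_i)})$ via the Kronecker structure, represent the quadratic form through the generating function, bound the symbol below by $(\tfrac{2}{\pi})^{\alpha_i}|\theta|^{\alpha_i}$ using $\sin(\theta/2)\geq\theta/\pi$, and control the mass of the trigonometric polynomial near $\theta=0$ with the Cauchy--Schwarz bound $|q(\theta)|^2\leq n_i\|{\bf w}\|_2^2$ to extract the factor $1-\tfrac{1}{\pi}$ before cancelling $(n_i+1)^{-\alpha_i}$ against $h_i^{-\alpha_i}$. The only cosmetic difference is your cutoff at $1/(n_i+1)$ rather than the paper's $1/n_i$ (the paper then weakens $n_i^{-\alpha_i}$ to $(n_i+1)^{-\alpha_i}$ at the end), which yields the identical constant $C$.
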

\begin{proof}
Let $(\lambda,{\bf w})$ be an eigenpair of $S_{n_{i}}^{(\alpha_{i})}$ with $\|{\bf w}\|_{2}=\sqrt{\sum_{k=0}^{n_{i}-1}|w_{k}|^{2}}=1$. 
Denote $$\psi(\theta)=\sum_{k=0}^{n_{i}-1}w_{k}e^{{\bf i} k\theta}.$$ Then, we have
\begin{align}\label{rel1}
\dfrac{1}{2\pi}\int_{-\pi}^{\pi}|\psi(\theta)|^{2}d\theta=\sum_{k=0}^{n_{i}-1}|w_{k}|^{2}=1,
\end{align}
and, by Cauchy-Schwarz inequality,
\begin{align}\label{rel2}
|\psi(\theta)|^{2}\leq\left(\sum_{k=0}^{n_{i}-1}|w_{k}|^{2}\right)\left(\sum_{k=0}^{n_{i}-1}|e^{{\bf i} k\theta}|^{2}\right)=n_{i}.
\end{align}
Besides, by $\sin(\theta/2)\geq \theta/\pi\geq 0,~\theta\in[0,\pi]$, we have
\begin{align}\label{rel3}
S^{(\alpha_{i})}(e^{\bf i\theta})=\left(1+\dfrac{\alpha_{i}}{6}\sin^{2}\dfrac{\theta}{2}\right)\left(4\sin^{2}\dfrac{\theta}{2}\right)^{\frac{\alpha_{i}}{2}}\geq\left(\dfrac{2}{\pi}\right)^{\alpha_{i}}|\theta|^{\alpha_{i}},
~\theta\in[-\pi,\pi].
\end{align}
Hence, we have
\begin{align*}
\lambda&={\bf w}^{*}S_{n_{i}}^{(\alpha_{i})}{\bf w}\\
&=\dfrac{1}{2\pi}\int_{[-\pi,\pi]}S^{(\alpha_{i})}(e^{\bf i\theta})|\psi(\theta)|^{2}d\theta\\
&\geq\dfrac{1}{2\pi}\int_{[-\pi,\pi]\setminus[-\frac{1}{n_{i}},\frac{1}{n_{i}}]}S^{(\alpha_{i})}(e^{\bf i\theta})|\psi(\theta)|^{2}d\theta\\
&\geq\Big(\frac{2}{\pi}\Big)^{\alpha_i}\dfrac{1}{2\pi}\int_{[-\pi,\pi]\setminus[-\frac{1}{n_{i}},\frac{1}{n_{i}}]}|\theta|^{\alpha_i}|\psi(\theta)|^{2}d\theta\qquad\Big(\textrm{by}~\big(\ref{rel3})\Big)
\\
&\geq\Big(\frac{2}{\pi n_i}\Big)^{\alpha_i}\left(1-\dfrac{1}{2\pi}\int_{[-\frac{1}{n_{i}},\frac{1}{n_{i}}]}|\psi(\theta)|^{2}d\theta\right)\qquad\Big(\textrm{by}~(\ref{rel1})\Big)\\
&\geq\Big(\frac{2}{\pi n_i}\Big)^{\alpha_i}\left(1-\dfrac{n_{i}}{2\pi}\int_{[-\frac{1}{n_{i}},\frac{1}{n_{i}}]}d\theta\right)\qquad\Big(\textrm{by}~(\ref{rel2})\Big)\\
&\geq\Big[\dfrac{2}{\pi(n_{i}+1)}\Big]^{\alpha_{i}}\left(1-\dfrac{1}{\pi}\right)>0.
\end{align*}
In particular, 
\begin{equation}\label{lambdaS_n}
\lambda_{\min}(S_{n_{i}}^{(\alpha_{i})})\geq\Big[\dfrac{2}{\pi(n_{i}+1)}\Big]^{\alpha_{i}}\left(1-\dfrac{1}{\pi}\right).
\end{equation}
Now, for any ${\bf u} \in U_h$, we have
\begin{align}\label{quadratic_formfor_T}
{\bf u}^{T}T{\bf u}&=
\sum\limits_{i=1}^{d}\eta_{i}{\bf u}^{T}I_{n_{i}^{-}}\otimes S_{n_{i}}^{(\alpha_{i})}\otimes I_{n_{i}^{+}}{\bf u}\nonumber\\
&\geq\sum\limits_{i=1}^{d}\eta_{i}\lambda_{\min}(S_{n_{i}}^{(\alpha_{i})}){\bf u}^{T}{\bf u}\nonumber\\
&\geq\Delta t\Big(1-\frac{1}{\pi}\Big)\sum\limits_{i=1}^{d}\dfrac{K_{i}(2/\pi)^{\alpha_i}}{2h_{i}^{\alpha_{i}}(n_{i}+1)^{\alpha_{i}}}\|{\bf u}\|_{2}^{2}\qquad\Big(\textrm{by}~(\ref{lambdaS_n})\Big)\nonumber\\
&\geq K_{\min}\Delta t\Big(1-\frac{1}{\pi}\Big)\Big(\frac{2}{\pi}\Big)^{\alpha_{\max}}\sum\limits_{i=1}^{d}\dfrac{1}{2h_{i}^{\alpha_{i}}(n_{i}+1)^{\alpha_{i}}}\|{\bf u}\|_{2}^{2}\nonumber\\
&=K_{\min}\Delta t\Big(1-\frac{1}{\pi}\Big)\Big(\frac{2}{\pi}\Big)^{\alpha_{\max}}\sum\limits_{i=1}^{d}\dfrac{1}{2(\hat{a}_{i}-\check{a}_{i})^{\alpha_{i}}}\|{\bf u}\|_{2}^{2},
\end{align}
in which 
$$
K_{\min}=\min_{i=1\wedge d}\{K_i\}\,~~{\rm and}~~ \,\alpha_{\max}=\max_{i=1\wedge d}\{\alpha_i\}.
$$
Multiplying $\prod_{i=1}^d h_i$ on both sides of (\ref{quadratic_formfor_T}), 
we obtain
\begin{equation}
\prod_{i=1}^d h_i{\bf u}^{T}T{\bf u}:=\langle T{\bf u},{\bf u} \rangle \geq  C K_{\min}\Delta t ||{\bf u}||^2,   
\end{equation}
with $C$ being defined by (\ref{definitionforC}), which completes the proof.
\end{proof}

\begin{theorem}\label{convergenceTH}
{\rm (Convergence)} Let ${\bf U}^{m+1}$ be the exact solution of (\ref{RSFDEs}) and ${\bf u}^{m+1}$ be the solution of the finite difference scheme (\ref{4thscheme_operator}) at time level $t_{m+1}$. Then, for $m=0\, \wedge\,M-1$,
we have
\begin{equation}
    ||{\bf u}^{m+1}-{\bf U}^{m+1}||\leq C_1(\Delta t^2+h_1^4+h_2^4+\cdots+h_d^4),
\end{equation}
where $C_1$ denotes a positive number independent of $h_i$ and $\Delta t$.
\end{theorem}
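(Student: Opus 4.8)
The plan is to run a discrete energy argument on the global error ${\bf e}^m := {\bf U}^m - {\bf u}^m$. First I would substitute the exact solution into the matrix form (\ref{matrix-form}). By the consistency relation (\ref{numerical_scheme}) together with the two truncation bounds, the exact solution satisfies the perturbed identity $(D^{m+\frac{1}{2}}+T){\bf U}^{m+1} = (D^{m+\frac{1}{2}}-T){\bf U}^{m} + \Delta t\,{\bf f}^{m+\frac{1}{2}} + \Delta t\,{\boldsymbol\rho}^{m+\frac{1}{2}}$, where ${\boldsymbol\rho}^{m+\frac{1}{2}}$ collects all truncation contributions (the spatial term $R_{\boldsymbol J}^{m+\frac{1}{2}}$, the temporal term $\hat R_{\boldsymbol J}^{m+\frac{1}{2}}$, and the Crank--Nicolson time-averaging of the spatial operator) and is therefore bounded pointwise by $c(\Delta t^2 + \sum_i h_i^4)$. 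Subtracting (\ref{matrix-form}) yields the clean error recursion $(D^{m+\frac{1}{2}}+T){\bf e}^{m+1} = (D^{m+\frac{1}{2}}-T){\bf e}^{m} + \Delta t\,{\boldsymbol\rho}^{m+\frac{1}{2}}$, with ${\bf e}^0 = {\bf 0}$. A useful preliminary step is to note that, since the discrete $L^2$-norm carries the factor $\prod_i h_i$ and $\prod_i h_i \prod_i n_i$ is dominated by the volume of $\Omega$, the pointwise bound upgrades to $\|{\boldsymbol\rho}^{m+\frac{1}{2}}\| \leq \tilde c\,(\Delta t^2 + \sum_i h_i^4)$ with $\tilde c$ independent of all mesh parameters; this is exactly why the $L^2$-norm is the natural one here.

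Next I would rewrite the recursion as $D^{m+\frac{1}{2}}({\bf e}^{m+1}-{\bf e}^{m}) + T({\bf e}^{m+1}+{\bf e}^{m}) = \Delta t\,{\boldsymbol\rho}^{m+\frac{1}{2}}$ and take the discrete inner product with ${\bf e}^{m+1}+{\bf e}^{m}$. Because $D^{m+\frac{1}{2}}$ is diagonal (hence symmetric), the cross terms cancel and the first term telescopes into $\langle D^{m+\frac{1}{2}}{\bf e}^{m+1},{\bf e}^{m+1}\rangle - \langle D^{m+\frac{1}{2}}{\bf e}^{m},{\bf e}^{m}\rangle$, i.e.\ the difference of two $D^{m+\frac{1}{2}}$-weighted norms. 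The second term equals $\langle T({\bf e}^{m+1}+{\bf e}^{m}),{\bf e}^{m+1}+{\bf e}^{m}\rangle$, which Lemma \ref{Fractional Norm Equivalence} bounds below by $C K_{\min}\Delta t\,\|{\bf e}^{m+1}+{\bf e}^{m}\|^2$. For the right-hand side I would apply Cauchy--Schwarz followed by Young's inequality with weight $\epsilon = C K_{\min}$, giving $\Delta t\,\langle{\boldsymbol\rho}^{m+\frac{1}{2}},{\bf e}^{m+1}+{\bf e}^{m}\rangle \leq C K_{\min}\Delta t\,\|{\bf e}^{m+1}+{\bf e}^{m}\|^2 + \frac{\Delta t}{4CK_{\min}}\|{\boldsymbol\rho}^{m+\frac{1}{2}}\|^2$. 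The first term exactly matches the coercivity lower bound of the $T$-term, so the two cancel and I am left with the clean one-step estimate $\|{\bf e}^{m+1}\|_{D^{m+\frac{1}{2}}}^2 - \|{\bf e}^{m}\|_{D^{m+\frac{1}{2}}}^2 \leq \frac{\Delta t}{4CK_{\min}}\|{\boldsymbol\rho}^{m+\frac{1}{2}}\|^2$. This is the crucial place where the $h$-independence of the constant $C$ in Lemma \ref{Fractional Norm Equivalence} propagates into the final bound, and it is what allows the estimate to hold with no restriction on the spatial step size.

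To conclude I would sum the one-step estimate over the time levels. I expect the \emph{main obstacle} to be that the weighting matrix $D^{m+\frac{1}{2}}$ varies with $m$, so the telescoping is not immediate: the step estimate controls $\|{\bf e}^{m+1}\|_{D^{m+\frac{1}{2}}}^2$ but the previous step produced a bound in the norm $\|\cdot\|_{D^{m-\frac{1}{2}}}$. Using $0<\check r\leq r\leq\hat r$ together with the Lipschitz-in-time regularity of $r$ (which is in any case already required for the $\mathcal O(\Delta t^2)$ temporal truncation bound) one gets $|r({\boldsymbol x},t_{m+\frac{1}{2}})-r({\boldsymbol x},t_{m-\frac{1}{2}})|\leq L\Delta t$, hence $\|{\bf e}^{m}\|_{D^{m+\frac{1}{2}}}^2 \leq (1+\tfrac{L}{\check r}\Delta t)\|{\bf e}^{m}\|_{D^{m-\frac{1}{2}}}^2$. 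Feeding this link into the step estimate turns it into a discrete Gronwall recursion $A_{m+1} \leq (1+\tfrac{L}{\check r}\Delta t)A_m + \frac{\Delta t}{4CK_{\min}}\|{\boldsymbol\rho}^{m+\frac{1}{2}}\|^2$ for $A_m := \|{\bf e}^{m}\|_{D^{m-\frac{1}{2}}}^2$ with $A_0 = 0$. The discrete Gronwall inequality, combined with $\sum_k \Delta t = T$ and the uniform truncation bound from the first paragraph, gives $A_{m+1} \leq \frac{T}{4CK_{\min}}e^{LT/\check r}\,\tilde c^2(\Delta t^2+\sum_i h_i^4)^2$, and finally $\check r\,\|{\bf e}^{m+1}\|^2 \leq A_{m+1}$ yields $\|{\bf e}^{m+1}\|\leq C_1(\Delta t^2+\sum_i h_i^4)$ with $C_1$ depending only on $T,\check r,\hat r,L,K_{\min},C$, hence independent of every $h_i$ and of $\Delta t$, as claimed. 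I would remark that if $r$ is independent of $t$, the weighted norms agree across steps, the sum telescopes directly, and the Gronwall step is unnecessary.
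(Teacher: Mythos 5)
Your argument is correct, but it is the ``dual'' energy estimate to the one in the paper. The paper tests the error equation with ${\bf e}^{m+1}-{\bf e}^{m}$: then it is the \emph{$T$-energy} $\langle T{\bf e}^{m},{\bf e}^{m}\rangle$ that telescopes (and $T$ does not depend on $m$, so the telescoping is immediate), the diagonal term $\langle D^{m+\frac12}({\bf e}^{m+1}-{\bf e}^{m}),{\bf e}^{m+1}-{\bf e}^{m}\rangle\ge\check r\|{\bf e}^{m+1}-{\bf e}^{m}\|^2$ is what absorbs the residual via Young's inequality with weight $\check r$, and Lemma \ref{Fractional Norm Equivalence} is invoked only once, at the very end, to convert the accumulated $T$-energy bound into an $L^2$ bound. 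You test with ${\bf e}^{m+1}+{\bf e}^{m}$, telescope the \emph{$D$-weighted} norm, and use the coercivity of $T$ at every step to absorb the residual. Both routes yield the claimed rate, but yours pays two prices that the paper's avoids: (i) because the weight $D^{m+\frac12}$ changes with $m$, you need $r$ to be Lipschitz in time plus a discrete Gronwall argument, which inserts an extra factor $e^{LT/\check r}$ into $C_1$; the paper needs only $0<\check r\le r\le\hat r$ and no Gronwall. Your remark that the Lipschitz condition is ``in any case already required'' for the $\mathcal O(\Delta t^2)$ truncation bound is not quite accurate, since the scheme evaluates $r$ exactly at $t_{m+\frac12}$ and the temporal truncation only requires smoothness of $u$; so your proof genuinely uses one more hypothesis than the paper's. (ii) Your one-step cancellation consumes the entire coercivity of $T$, whereas the paper keeps the full $T$-energy and so also controls $\langle T{\bf e}^{k+1},{\bf e}^{k+1}\rangle$ along the way. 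On the other hand, your derivation of the $L^2$ bound on ${\boldsymbol\rho}^m$ from the pointwise truncation bounds via $\prod_i h_i\prod_i n_i\le|\Omega|$ is a detail the paper simply asserts, and your observation that the Gronwall step disappears when $r$ is time-independent is a nice sanity check.
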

\begin{proof}
Denote ${\bf e}^{m+1}={\bf U}^{m+1}-{\bf u}^{m+1}$ with ${\bf e}^0={\bf 0}$, then subtracting (\ref{4thscheme_operator}) from (\ref{numerical_scheme}), we have the following matrix form for the error equation
\begin{equation}\label{errorequation}
    D^{m+\frac{1}{2}}({\bf e}^{m+1}-{\bf e}^m)+T({\bf e}^{m+1}+{\bf e}^m)=\Delta t {\boldsymbol{\rho} }^{m},
\end{equation}
where
$||{\boldsymbol{\rho}}^m||\leq c (\Delta t^2+h_1^4+h_2^4+\cdots+h_d^4)$ with  $c$ being a positive constant independent of $\Delta t$ and $h_i$.

Taking the inner product of both sides of (\ref{errorequation}) 
with ${\bf e}^{m+1}-{\bf e}^{m}$ yields 
\begin{equation}\label{erroreq2}
    \langle D^{m+\frac{1}{2}}({\bf e}^{m+1}-{\bf e}^{m}),{\bf e}^{m+1}-{\bf e}^{m}\rangle + \langle T({\bf e}^{m+1}+{\bf e}^{m}),{\bf e}^{m+1}-{\bf e}^{m}\rangle=\langle \Delta t \boldsymbol{\rho}^{m},{\bf e}^{m+1}-{\bf e}^{m}\rangle.
\end{equation}
Next, for (\ref{erroreq2}), each term will be estimated. First,
\begin{eqnarray}\label{estimation1}
   \langle D^{m+\frac{1}{2}}({\bf e}^{m+1}-{\bf e}^{m}),{\bf e}^{m+1}-{\bf e}^{m}\rangle &=& \prod_{i=1}^d h_i\sum_{{\boldsymbol J}\in {\mathbb L}}r_{{\boldsymbol J}}^{m+\frac{1}{2}}
   (e_{{\boldsymbol J}}^{m+1}-e_{{\boldsymbol J}}^{m})^2 \nonumber\\
   &\geq&\check{r}||{\bf e}^{m+1}-{\bf e}^m||^2.
\end{eqnarray}
Second, by straightforward calculation, we obtain
\begin{eqnarray}\label{estimation2}
 \langle T({\bf e}^{m+1}+{\bf e}^{m}),{\bf e}^{m+1}-{\bf e}^{m}\rangle=\langle T{\bf e}^{m+1},{\bf e}^{m+1}\rangle-\langle T{\bf e}^{m},{\bf e}^{m}\rangle.
\end{eqnarray}
Last, 
\begin{eqnarray}\label{estimation3}
  \langle \Delta t {\boldsymbol{\rho}}^{m},{\bf e}^{m+1}-{\bf e}^{m}\rangle&\leq& || {\Delta t\boldsymbol{\rho}}^m||\cdot||{\bf e}^{m+1}-{\bf e}^{m}||\nonumber\\
  &=&2\sqrt{\check{r}\cdot\frac{1}{4\check{r}}}||{\Delta t\boldsymbol{\rho}}^m||\cdot||{\bf e}^{m+1}-{\bf e}^{m}||\nonumber\\
  &\leq& \check{r}||{\bf e}^{m+1}-{\bf e}^{m}||^2+\frac{1}{4\check{r}}||{\Delta t\boldsymbol{\rho}}^{m}||^2.
\end{eqnarray}
Based on Equations (\ref{erroreq2})-(\ref{estimation3}), we obtain
\begin{eqnarray}\label{estimation4}
   \langle T{\bf e}^{m+1},{\bf e}^{m+1}\rangle-\langle T{\bf e}^{m},{\bf e}^{m}\rangle \leq \frac{1}{4\check{r}}||{\Delta t\boldsymbol{\rho}}^{m}||^2.
\end{eqnarray}
For (\ref{estimation4}), summing up for $m$ from $0$ to $k$ yields
\begin{eqnarray}\label{estimation5}
   \langle T {\bf e}^{k+1},{\bf e}^{k+1} \rangle&\leq&\langle T {\bf e}^{0},{\bf e}^{0} \rangle + \frac{1}{4\check{r}}\sum_{m=0}^k ||{\Delta t\boldsymbol{\rho}}^m||^2,\nonumber\\
   &=&\frac{1}{4\check{r}}\sum_{m=0}^k ||{\Delta t\boldsymbol{\rho}}^m||^2,\quad k\in 0 \wedge M-1.
\end{eqnarray}
According to Lemma \ref{Fractional Norm Equivalence}, (\ref{estimation4}) is discredited into
\begin{eqnarray}
    ||{\bf e}^{k+1}||^2 &\leq&
    \frac{1}{C K_{\min} \Delta t}  \langle T{\bf e}^{k+1},{\bf e}^{k+1} \rangle\nonumber\\
    &\leq&\frac{1}{4\check{r}CK_{\min}\Delta t}\sum_{m=0}^k ||{\Delta t\boldsymbol{\rho}}^m||^2\nonumber\\
    &\leq& \frac{c^2}{4\check{r}CK_{\min}\Delta t}(k+1)\Delta t^2(\Delta t^2+h_1^4+h_2^4+\cdots+ h_d^4)^2\nonumber\\
    &\leq&\frac{c^2T}{4\check{r}CK_{\min}}(\Delta t^2+h_1^4+h_2^4+\cdots+ h_d^4)^2.
\end{eqnarray}
That is 
\begin{eqnarray}
    ||{\bf e}^{k+1}||\leq C_1(\Delta t^2+h_1^4+h_2^4+\cdots+h_d^4),
\end{eqnarray}
in which $C_1=c\sqrt{\frac{T}{4\check{r}CK_{\min}}}$ with $C$ being defined by (\ref{definitionforC}). Replacing $k$ with $m$, then Theorem \ref{convergenceTH} is proved.
\end{proof}
\begin{remark}
Theorems \ref{stability theorem} and \ref{convergenceTH} shows that the CN-4FCD scheme is stable and can achieve second-order accuracy in time and fourth-order accuracy in space under the discrete $L^2$-norm without any restrictions on the spatial step size. In contrast, the stability and convergence of the quasi-compact scheme requires that the spatial step is small. 
For more details, see Theorems 3.1 and 3.2 in \cite{zhang2024two}. 
\end{remark}

\section{Sine Transform
Based Preconditioner and Its Spectral Analysis}\label{Sec-3}
Based on the diagonal-plus-Toeplitz structure of the coefficient matrix in (\ref{matrix-form}),
our sine transform based preconditioner is defined as
\begin{equation}\label{ptau}
P_{\tau}^{m+1}=\bar r^{m+\frac{1}{2}} + \tau(T).
\end{equation}
Here $\bar r^{m+\frac{1}{2}}$ is a kind of average of $r(\boldsymbol{x},t_{m+\frac{1}{2}})$ and can be chosen as (but not limited to) $\sqrt{\hat{r}^{m+\frac{1}{2}}\check{r}^{m+\frac{1}{2}}}$ or $\frac{\hat{r}^{m+\frac{1}{2}}+\check{r}^{m+\frac{1}{2}}}{2}$
with
\begin{equation}
\hat{r}^{m+\frac{1}{2}}=\max r\left(\boldsymbol{x}, t_{m+ \frac{1}{2}}\right), \quad \check{r}^{m+\frac{1}{2}}=\min r\left(\boldsymbol{x}, t_{m+\frac{1}{2}}\right),
\end{equation}
and 
\begin{equation}\label{ptau1}
\tau(T)=\sum_{i=1}^d \eta_i I_{n_i^{-}} \otimes P_{\alpha_i} \otimes I_{n_i^{+}}.
\end{equation}
It is remarked that $P_{\alpha_i}$ is the sine transform based preconditioner proposed in \cite{qu2025novel}, which is defined as 
$P_{\alpha_i}:=Q_{n_i} \tau(\hat S_{n_i}^{\left(\alpha_i\right)})$
with
\begin{equation}\label{}
Q_{n_i}=\left(\begin{array}{ccccc}
1 &  & & & \\
 & 1 &  & & \\
&  & \ddots &  & \\
& &  & 1 &  \\
& & &  & 1
\end{array}\right)+\frac{\alpha_i}{24}\left(\begin{array}{ccccc}
2 & -1 & & & \\
-1 & 2 & -1 & & \\
& \ddots & \ddots & \ddots & \\
& & -1 & 2 & -1 \\
& & & -1 & 2
\end{array}\right) \in \mathbb{R}^{n_i\times n_i}
\end{equation}
and $\hat S_{n_i}^{\left(\alpha_i\right)}$ is the spatial discretization matrix from the fractional centered difference approximation for the Riesz fractional derivatives \cite{CD2,zhang2022fast}.

\begin{proposition}\cite{lin2018efficient}\label{main_ineq}
For positive numbers $\xi_i$ and $\zeta_i$ $(1\leq i \leq m)$, it holds that
\begin{equation*}
\min _{1 \leq i \leq m} \frac{\xi_i}{\zeta_i} \leq\left(\sum_{i=1}^m \zeta_i\right)^{-1}\left(\sum_{i=1}^m \xi_i\right) \leq \max _{1 \leq i \leq m} \frac{\xi_i}{\zeta_i}.
\end{equation*}
\end{proposition}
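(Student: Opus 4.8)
The plan is to reduce the bound on the ratio of the two sums to a termwise comparison, with the whole argument resting on the hypothesis that every $\zeta_i$ is strictly positive. First I would name the two extreme ratios: set
\[
\mu := \min_{1\leq i \leq m} \frac{\xi_i}{\zeta_i}, \qquad M := \max_{1\leq i \leq m} \frac{\xi_i}{\zeta_i},
\]
so that by construction each individual ratio obeys $\mu \leq \xi_i/\zeta_i \leq M$ for every $1\leq i \leq m$.

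Next, since $\zeta_i>0$, I would clear the denominator termwise: multiplying the chain $\mu \leq \xi_i/\zeta_i \leq M$ through by $\zeta_i$ preserves the sense of the inequalities and gives $\mu\zeta_i \leq \xi_i \leq M\zeta_i$. Summing these $m$ relations yields
\[
\mu \sum_{i=1}^m \zeta_i \;\leq\; \sum_{i=1}^m \xi_i \;\leq\; M \sum_{i=1}^m \zeta_i,
\]
and dividing through by the strictly positive quantity $\sum_{i=1}^m \zeta_i$ (which again respects the inequalities) produces $\mu \leq \big(\sum_{i=1}^m \zeta_i\big)^{-1}\big(\sum_{i=1}^m \xi_i\big) \leq M$, exactly the assertion.

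There is no genuine obstacle here; the single point demanding care is the repeated use of positivity. Both the termwise multiplication by $\zeta_i$ and the concluding division by $\sum_{i=1}^m \zeta_i$ depend on these quantities being positive so that no inequality reverses --- were any $\zeta_i$ permitted to vanish or be negative, the min/max envelope would no longer bracket the pooled ratio. Since the statement assumes $\xi_i,\zeta_i>0$, this is guaranteed, and the estimate follows in the three short steps above.
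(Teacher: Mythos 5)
Your proof is correct and is the standard elementary argument for this mediant-type inequality: bracket each $\xi_i$ by $\mu\zeta_i$ and $M\zeta_i$, sum, and divide by the positive total $\sum_i \zeta_i$. The paper itself does not reprove this proposition but simply cites it, and the cited proof is essentially the same termwise-comparison argument you give, so there is nothing further to reconcile.
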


\begin{lemma}\label{pre_SPD}
The sine transform based preconditioner $P_{\tau}$ defined in (\ref{ptau}) is SPD and thus $P_{\tau}$ is invertible.
\end{lemma}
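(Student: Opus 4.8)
The plan is to exhibit $P_\tau$ as the sum of a positive multiple of the identity and a symmetric positive definite (SPD) matrix, since such a sum is again SPD and hence invertible. First I would dispose of the diagonal part: because $0<\check r\le r(\boldsymbol x,t)\le\hat r$, both admissible choices $\sqrt{\hat r^{m+\frac12}\check r^{m+\frac12}}$ and $\tfrac12(\hat r^{m+\frac12}+\check r^{m+\frac12})$ are strictly positive scalars, so $\bar r^{m+\frac12}I$ is trivially SPD. It then suffices to show that $\tau(T)=\sum_{i=1}^d \eta_i\, I_{n_i^-}\otimes P_{\alpha_i}\otimes I_{n_i^+}$ is SPD. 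Since each $\eta_i>0$, a Kronecker product of SPD matrices is SPD, and a sum of SPD matrices is SPD, this reduces to proving that each block $P_{\alpha_i}=Q_{n_i}\,\tau(\hat S_{n_i}^{(\alpha_i)})$ is SPD.

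The crucial point is that $P_{\alpha_i}$ is a \emph{product} of two matrices, so its symmetry is not automatic and the naive appeal to ``a product of SPD matrices is SPD'' would be false. The key observation I would use is that both factors belong to the $\tau$-algebra, i.e. both are diagonalized by one and the same symmetric orthogonal sine-transform matrix. By construction $\tau(\hat S_{n_i}^{(\alpha_i)})$ lies in this algebra, while $Q_{n_i}=I+\tfrac{\alpha_i}{24}\,\mathrm{tridiag}(-1,2,-1)$ is the identity plus a positive multiple of the standard discrete Laplacian, which is the canonical generator of the $\tau$-algebra and is likewise diagonalized by the sine transform. Consequently $Q_{n_i}$ and $\tau(\hat S_{n_i}^{(\alpha_i)})$ commute and are simultaneously diagonalizable, so $P_{\alpha_i}$ is symmetric and its eigenvalues are the products of the corresponding eigenvalues of the two factors.

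It then remains to verify positivity of both factors. The eigenvalues of $\mathrm{tridiag}(-1,2,-1)$ are $4\sin^2\big(\tfrac{j\pi}{2(n_i+1)}\big)>0$ for $j=1,\dots,n_i$, so $Q_{n_i}$ has eigenvalues $1+\tfrac{\alpha_i}{6}\sin^2\big(\tfrac{j\pi}{2(n_i+1)}\big)>0$ and is SPD. For the second factor, $\tau(\hat S_{n_i}^{(\alpha_i)})$ is the $\tau$-approximation of the centered-difference matrix $\hat S_{n_i}^{(\alpha_i)}$, whose generating function $(4\sin^2(\theta/2))^{\alpha_i/2}$ is nonnegative and not identically zero; hence $\tau(\hat S_{n_i}^{(\alpha_i)})$ is SPD (see \cite{qu2025novel,zhang2022fast}). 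Because the eigenvalues of $P_{\alpha_i}$ are products of these strictly positive numbers, $P_{\alpha_i}$ is SPD. Assembling the pieces gives that $\tau(T)$, and therefore $P_\tau=\bar r^{m+\frac12}I+\tau(T)$, is SPD, so in particular $P_\tau$ is invertible.

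The hard part will be exactly the symmetry argument in the second paragraph: one must recognize that $Q_{n_i}$ is itself a $\tau$-matrix so that the product of the two SPD factors remains symmetric. Everything else (Kronecker and sum closure of SPD matrices, positivity of the tridiagonal factor, and positivity of the $\tau$-approximation via its nonnegative symbol) is routine once this commutativity is in place.
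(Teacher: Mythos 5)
Your proof is correct, and its outer skeleton is the same as the paper's: both arguments reduce the claim to the strict positivity of the scalar $\bar r^{m+\frac{1}{2}}$, the positivity of the weights $\eta_i$, and the SPD-ness of each block $P_{\alpha_i}$, and then invoke closure of SPD matrices under Kronecker products with identities and under sums. The genuine difference is in how the SPD-ness of $P_{\alpha_i}$ is obtained. The paper disposes of it in one line by citing \cite{qu2025novel}; you instead prove it, and you correctly identify the one non-routine point, namely that $P_{\alpha_i}=Q_{n_i}\tau(\hat S_{n_i}^{(\alpha_i)})$ is a \emph{product}, so symmetry is not automatic and ``SPD times SPD is SPD'' is false in general. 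Your resolution --- that $Q_{n_i}=I+\frac{\alpha_i}{24}\,\mathrm{tridiag}(-1,2,-1)$ and $\tau(\hat S_{n_i}^{(\alpha_i)})$ both lie in the $\tau$-algebra, hence commute and are simultaneously diagonalized by the symmetric orthogonal sine transform, so that $P_{\alpha_i}$ is symmetric with eigenvalues $\left(1+\frac{\alpha_i}{6}\sin^2\frac{\theta_j}{2}\right)\lambda_j\left(\tau(\hat S_{n_i}^{(\alpha_i)})\right)>0$ --- is exactly the argument underlying the cited result, and it makes the lemma self-contained where the paper's version is not. The only ingredient you still outsource is the positivity of $\tau(\hat S_{n_i}^{(\alpha_i)})$; citing \cite{zhang2022fast,qu2025novel} is acceptable, though it also follows in two lines from the sign pattern of the 2FCD coefficients ($\hat s_0>0$, $\hat s_k<0$ for $k\neq 0$, $\sum_k\hat s_k=0$), which forces every eigenvalue $\hat s_0+2\sum_{k=1}^{n_i-1}\hat s_k\cos(k\theta_j)$ of the natural $\tau$ matrix to be bounded below by $-2\sum_{k\ge n_i}\hat s_k>0$. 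In short: same decomposition, but your write-up supplies the key sub-proof that the paper delegates to a reference.
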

\begin{proof}
It has been shown in \cite{qu2025novel} that $P_{\alpha_i}$ is SPD. Combining (\ref{ptau}) with (\ref{ptau1}) and noticing that $\eta_i>0$, the result can be obtained directly using the properties of Kronecker product.
\end{proof}

\begin{lemma}\label{1D_bound}\cite{qu2025novel}
Let $\lambda(P_{\alpha_i}^{-1}S_{n_i}^{\left(\alpha_i\right)})$ be an eigenvalue of the matrix $P_{\alpha_i}^{-1}S_{n_i}^{\left(\alpha_i\right)}$. Then,
\begin{equation}\nonumber
\frac{3}{8}\leq\lambda(P_{\alpha_i}^{-1}S_{n_i}^{\left(\alpha_i\right)})\le2.
\end{equation}
\end{lemma}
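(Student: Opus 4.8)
The plan is to reduce the spectral bound to a two-sided operator inequality and then to exploit the common sine-transform eigenbasis shared by the matrices that build $P_{\alpha_i}$. Since both $S_{n_i}^{(\alpha_i)}$ and $P_{\alpha_i}$ are SPD (the latter by Lemma \ref{pre_SPD}), every eigenvalue of $P_{\alpha_i}^{-1}S_{n_i}^{(\alpha_i)}$ is a generalized Rayleigh quotient $\frac{{\bf x}^{*}S_{n_i}^{(\alpha_i)}{\bf x}}{{\bf x}^{*}P_{\alpha_i}{\bf x}}$, so it suffices to prove the operator bounds $\frac{3}{8}P_{\alpha_i}\preceq S_{n_i}^{(\alpha_i)}\preceq 2P_{\alpha_i}$.

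The key structural observation I would establish first is that $P_{\alpha_i}$ is itself a $\tau$-matrix. The tridiagonal matrix $Q_{n_i}$ carries no Hankel correction, so it coincides with the $\tau$-matrix whose eigenvalues are $q(\theta_j):=1+\frac{\alpha_i}{6}\sin^{2}\frac{\theta_j}{2}$ at $\theta_j=\frac{j\pi}{n_i+1}$, while the natural $\tau$-matrix $\tau(\hat S_{n_i}^{(\alpha_i)})$ has eigenvalues $g(\theta_j):=(4\sin^{2}\frac{\theta_j}{2})^{\alpha_i/2}$. Because $\tau$-matrices form a commutative algebra diagonalized by the discrete sine transform, $P_{\alpha_i}=Q_{n_i}\tau(\hat S_{n_i}^{(\alpha_i)})$ is again a $\tau$-matrix with eigenvalues $q(\theta_j)g(\theta_j)=S^{(\alpha_i)}(e^{{\bf i}\theta_j})$; that is, $P_{\alpha_i}$ is precisely the natural $\tau$-preconditioner $\tau(S_{n_i}^{(\alpha_i)})$ of the 4FCD Toeplitz matrix, whose generating function is $S^{(\alpha_i)}(e^{{\bf i}\theta})$ in (\ref{generatingS}). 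This reduces the claim to the single-symbol question of bounding $\lambda\big(\tau(S_{n_i}^{(\alpha_i)})^{-1}S_{n_i}^{(\alpha_i)}\big)$.

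For the two-sided bound I would write $S_{n_i}^{(\alpha_i)}=\tau(S_{n_i}^{(\alpha_i)})+H$, where $H$ is the Hankel term of the $\tau$-correction, and estimate $1+\frac{{\bf x}^{*}H{\bf x}}{{\bf x}^{*}\tau(S_{n_i}^{(\alpha_i)}){\bf x}}$. Here I would compare the Toeplitz quadratic form ${\bf x}^{*}S_{n_i}^{(\alpha_i)}{\bf x}=\frac{1}{2\pi}\int_{-\pi}^{\pi}S^{(\alpha_i)}(e^{{\bf i}\theta})|\psi(\theta)|^{2}d\theta$ (the integral representation already exploited in the proof of Lemma \ref{Fractional Norm Equivalence}, with $\psi(\theta)=\sum_{k}x_k e^{{\bf i}k\theta}$) against the sine-basis spectral form ${\bf x}^{*}\tau(S_{n_i}^{(\alpha_i)}){\bf x}=\sum_{j}S^{(\alpha_i)}(e^{{\bf i}\theta_j})|\hat x_j|^{2}$, using the nonnegativity and the lower estimate (\ref{rel3}) of the symbol together with the monotonicity of the factors $q$ and $g$ and the sign pattern of the coefficients $s_k^{(\alpha_i)}$ recorded in Proposition \ref{coef_prop}.

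The main obstacle is producing the \emph{tight, uniform} constants $\frac{3}{8}$ and $2$ rather than mere qualitative clustering. Since $\lambda_{\min}(\tau(S_{n_i}^{(\alpha_i)}))=S^{(\alpha_i)}(e^{{\bf i}\theta_1})\to 0$ as the mesh refines, one cannot bound the Hankel term in norm and divide by the smallest $\tau$-eigenvalue; instead the estimate $|{\bf x}^{*}H{\bf x}|\le c\,{\bf x}^{*}\tau(S_{n_i}^{(\alpha_i)}){\bf x}$ must be established directly at the level of quadratic forms, with $c$ independent of $n_i$ and of $\alpha_i\in(1,2)$. This is precisely where the detailed form of $S^{(\alpha_i)}$—the extra positive factor $q(\theta)\in[1,1+\alpha_i/6]$ contributed by $Q_{n_i}$ and the behaviour of $S^{(\alpha_i)}(e^{{\bf i}\theta})$ near $\theta=0$—must be used to pin the two sharp constants, and I expect this uniform control of the Hankel defect to be the most delicate part of the argument.
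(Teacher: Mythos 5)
First, note that the paper does not prove this lemma at all: it is quoted verbatim from \cite{qu2025novel}, so your attempt can only be judged against the argument that reference must contain. Your setup is fine as far as it goes --- reducing to the generalized Rayleigh quotient $\frac{{\bf x}^{*}S_{n_i}^{(\alpha_i)}{\bf x}}{{\bf x}^{*}P_{\alpha_i}{\bf x}}$ and observing that $P_{\alpha_i}=Q_{n_i}\tau(\hat S_{n_i}^{(\alpha_i)})$ lies in the $\tau$ algebra is correct. But there are two genuine problems. The smaller one: $P_{\alpha_i}$ is \emph{not} the natural $\tau$ matrix $\tau(S_{n_i}^{(\alpha_i)})$ of the 4FCD matrix, and its eigenvalues are not $S^{(\alpha_i)}(e^{{\bf i}\theta_j})$; the natural $\tau$ matrix of a Toeplitz matrix has the \emph{truncated} symbol at the grid points as eigenvalues, and the truncated symbol of the product $q\hat g$ (with $q(\theta)=1+\frac{\alpha_i}{6}\sin^2\frac{\theta}{2}$ and $\hat g(\theta)=(4\sin^2\frac{\theta}{2})^{\alpha_i/2}$) is not $q(\theta_j)$ times the truncated symbol of $\hat g$. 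The serious one: the entire content of the lemma --- the uniform constants $\frac{3}{8}$ and $2$ --- is exactly the step you defer. The route you sketch, bounding the Hankel defect $H=S_{n_i}^{(\alpha_i)}-\tau(S_{n_i}^{(\alpha_i)})$ directly against $\tau(S_{n_i}^{(\alpha_i)})$, is obstructed by Proposition \ref{coef_prop}(iii): the standard Hankel-versus-$\tau$ estimate (as in \cite{huang2021spectral}) requires all off-diagonal Toeplitz coefficients to have one sign, and $s_{\pm 2}^{(\alpha_i)}$ changes sign at $\alpha_i^{*}\approx 1.6516$, so that argument does not apply to the 4FCD symbol.

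The missing idea is to \emph{avoid} the Hankel analysis for the 4FCD matrix altogether and exploit the factorization $S^{(\alpha_i)}(e^{{\bf i}\theta})=q(\theta)\,\hat g(\theta)$ with $1\leq q(\theta)\leq 1+\frac{\alpha_i}{6}<\frac{4}{3}$. This sandwiches both quadratic forms by their 2FCD counterparts: ${\bf x}^{*}\hat S_{n_i}^{(\alpha_i)}{\bf x}\leq{\bf x}^{*}S_{n_i}^{(\alpha_i)}{\bf x}\leq\frac{4}{3}{\bf x}^{*}\hat S_{n_i}^{(\alpha_i)}{\bf x}$ (via the integral representation of the Toeplitz form) and ${\bf x}^{*}\tau(\hat S_{n_i}^{(\alpha_i)}){\bf x}\leq{\bf x}^{*}P_{\alpha_i}{\bf x}\leq\frac{4}{3}{\bf x}^{*}\tau(\hat S_{n_i}^{(\alpha_i)}){\bf x}$ (via the common sine eigenbasis). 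Combining with the known 2FCD bound $\frac{1}{2}\leq\lambda\big(\tau(\hat S_{n_i}^{(\alpha_i)})^{-1}\hat S_{n_i}^{(\alpha_i)}\big)\leq\frac{3}{2}$ from \cite{huang2021spectral} --- a result you never invoke, yet it is the essential ingredient --- gives precisely $\frac{3}{8}=\frac{3}{4}\cdot\frac{1}{2}$ and $2=\frac{4}{3}\cdot\frac{3}{2}$. The very asymmetry of the interval $[\frac{3}{8},2]$ about $1$ signals this two-factor structure; a direct Hankel bound would not produce it. As written, your proposal identifies the right objects but does not contain a proof of the inequality.
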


\begin{theorem}\label{eigenprematrix}
Let $\lambda(P_\tau^{-1}(D^{m+\frac{1}{2}}+T))$ be an eigenvalue of the matrix $P_\tau^{-1}(D^{m+\frac{1}{2}}+T)$. Then,
\begin{equation}\nonumber
\min \left\{\frac{\check{r}^{m+\frac{1}{2}}}{\hat{r}^{m+\frac{1}{2}}}, \frac{3}{8}\right\} \leq \lambda(P_\tau^{-1}(D^{m+\frac{1}{2}}+T)) \leq \max \left\{\frac{\hat{r}^{m+\frac{1}{2}}}{\check{r}^{m+\frac{1}{2}}}, 2\right\}.
\end{equation}
\end{theorem}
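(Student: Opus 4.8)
The plan is to exploit that both $P_\tau$ and $D^{m+\frac{1}{2}}+T$ are SPD (by Lemma~\ref{pre_SPD} together with the SPD property of $T$ and the positivity of the diagonal matrix $D^{m+\frac{1}{2}}$), so that every eigenvalue of $P_\tau^{-1}(D^{m+\frac{1}{2}}+T)$ is real, positive, and governed by the generalized Rayleigh quotient. Concretely, each such eigenvalue lies between $\min_{\mathbf{x}\neq\mathbf{0}}\frac{\mathbf{x}^T(D^{m+\frac{1}{2}}+T)\mathbf{x}}{\mathbf{x}^T P_\tau\mathbf{x}}$ and the corresponding maximum. Splitting the numerator and denominator as $\mathbf{x}^TD^{m+\frac{1}{2}}\mathbf{x}+\mathbf{x}^TT\mathbf{x}$ over $\bar r^{m+\frac{1}{2}}\mathbf{x}^T\mathbf{x}+\mathbf{x}^T\tau(T)\mathbf{x}$, I would apply Proposition~\ref{main_ineq} with two terms (the positivity $\bar r^{m+\frac{1}{2}}\mathbf{x}^T\mathbf{x}>0$ and $\mathbf{x}^T\tau(T)\mathbf{x}>0$ of the two denominators being guaranteed since $\tau(T)$ is SPD) to sandwich the quotient between the minimum and the maximum of the two partial ratios $\frac{\mathbf{x}^TD^{m+\frac{1}{2}}\mathbf{x}}{\bar r^{m+\frac{1}{2}}\mathbf{x}^T\mathbf{x}}$ and $\frac{\mathbf{x}^TT\mathbf{x}}{\mathbf{x}^T\tau(T)\mathbf{x}}$. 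It then remains to bound each ratio uniformly in $\mathbf{x}$.

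For the diagonal ratio, since $D^{m+\frac{1}{2}}$ has entries in $[\check r^{m+\frac{1}{2}},\hat r^{m+\frac{1}{2}}]$ I immediately obtain $\frac{\check r^{m+\frac{1}{2}}}{\bar r^{m+\frac{1}{2}}}\le\frac{\mathbf{x}^TD^{m+\frac{1}{2}}\mathbf{x}}{\bar r^{m+\frac{1}{2}}\mathbf{x}^T\mathbf{x}}\le\frac{\hat r^{m+\frac{1}{2}}}{\bar r^{m+\frac{1}{2}}}$. Because $\bar r^{m+\frac{1}{2}}$ is an average lying in $[\check r^{m+\frac{1}{2}},\hat r^{m+\frac{1}{2}}]$ (which holds for both the geometric and arithmetic mean choices), these bounds loosen monotonically to $\frac{\check r^{m+\frac{1}{2}}}{\hat r^{m+\frac{1}{2}}}$ from below and $\frac{\hat r^{m+\frac{1}{2}}}{\check r^{m+\frac{1}{2}}}$ from above.

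For the Toeplitz ratio, the key is to lift the one-dimensional bound of Lemma~\ref{1D_bound} to the multilevel structure. Since $S_{n_i}^{(\alpha_i)}$ and $P_{\alpha_i}$ are both SPD and the eigenvalues of $P_{\alpha_i}^{-1}S_{n_i}^{(\alpha_i)}$ lie in $[\frac{3}{8},2]$, I have the Loewner ordering $\frac{3}{8}P_{\alpha_i}\preceq S_{n_i}^{(\alpha_i)}\preceq 2P_{\alpha_i}$. This ordering is preserved under Kronecker multiplication by identity matrices, under scaling by the positive numbers $\eta_i$, and under summation over $i$, which yields $\frac{3}{8}\,\tau(T)\preceq T\preceq 2\,\tau(T)$ and hence $\frac{3}{8}\le\frac{\mathbf{x}^TT\mathbf{x}}{\mathbf{x}^T\tau(T)\mathbf{x}}\le2$ for all $\mathbf{x}\neq\mathbf{0}$. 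Equivalently, one may invoke Proposition~\ref{main_ineq} a second time over the index $i$, applied to the ratio of the two Kronecker sums, and bound each summand-wise quotient by Lemma~\ref{1D_bound}.

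Combining the two ranges through the outer $\min$ and $\max$ produced by the first application of Proposition~\ref{main_ineq} then delivers the lower bound $\min\{\frac{\check r^{m+\frac{1}{2}}}{\hat r^{m+\frac{1}{2}}},\frac{3}{8}\}$ and the upper bound $\max\{\frac{\hat r^{m+\frac{1}{2}}}{\check r^{m+\frac{1}{2}}},2\}$, completing the argument. I expect the main obstacle to be the Toeplitz-ratio step: carefully justifying that the scalar eigenvalue bound of Lemma~\ref{1D_bound} transfers to a Loewner-order (quadratic-form) inequality between SPD matrices and that this inequality survives the Kronecker-sum assembly of $T$ and $\tau(T)$. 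Once this is secured, the remainder is a direct assembly of the two auxiliary inequalities via the min--max ratio bound.
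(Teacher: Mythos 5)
Your proposal is correct and follows essentially the same route as the paper: bound the generalized Rayleigh quotient $\frac{\mathbf{x}^T(D^{m+\frac{1}{2}}+T)\mathbf{x}}{\mathbf{x}^T P_\tau \mathbf{x}}$ by applying Proposition~\ref{main_ineq} to the term-by-term ratios, using Lemma~\ref{1D_bound} for the Kronecker/Toeplitz blocks and the bound $\check r^{m+\frac{1}{2}}\le \bar r^{m+\frac{1}{2}}\le \hat r^{m+\frac{1}{2}}$ for the diagonal part. The only cosmetic difference is that the paper applies the ratio bound once across all $d+1$ summands, whereas you group the $d$ Toeplitz terms first (via the Loewner ordering $\tfrac{3}{8}\tau(T)\preceq T\preceq 2\tau(T)$, which you correctly note is preserved under Kronecker products with identities, positive scaling, and summation) and then combine with the diagonal ratio.
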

\begin{proof}
For $i= 1,2,\ldots d$, by the Rayleigh quotient theorem and Lemma \ref{1D_bound}, it holds that
\begin{equation}\label{matrix_ineq}
\frac{3}{8}<\lambda_{\min}\big(I_{n_i^{-}} \otimes P_{\alpha_i}^{-1} S_{n_i}^{\left(\alpha_i\right)} \otimes I_{n_i^{+}}\big)\leq\frac{v^{T}(I_{n_i^{-}} \otimes S_{n_i}^{\left(\alpha_i\right)} \otimes I_{n_i^{+}})v}{v^{T} (I_{n_i^{-}} \otimes P_{\alpha_i} \otimes I_{n_i^{+}}) v}\leq \lambda_{\max}\big(I_{n_i^{-}} \otimes P_{\alpha_i}^{-1} S_{n_i}^{\left(\alpha_i\right)} \otimes I_{n_i^{+}}\big)<2.
\end{equation}
On the other hand, from the construction of $P_{\tau}^{m+1}$, it is obvious that 
\begin{equation}\label{coef_ineq}
\frac{\check{r}^{m+\frac{1}{2}}}{\hat{r}^{m+\frac{1}{2}}} \leq\frac{v^{T} D^{m+\frac{1}{2}} v}{v^{T} \bar r^{m+\frac{1}{2}} v}\leq \frac{\hat{r}^{m+\frac{1}{2}}}{\check{r}^{m+\frac{1}{2}}}.
\end{equation}
Combining Equations (\ref{matrix_ineq}), (\ref{coef_ineq}) with Proposition \ref{main_ineq}, we have
\begin{equation}\nonumber
\min \left\{\frac{\check{r}^{m+\frac{1}{2}}}{\hat{r}^{m+\frac{1}{2}}}, \frac{3}{8}\right\} \leq \frac{v^{T}(D^{m+\frac{1}{2}}+\sum_{i=1}^d \eta_i I_{n_i^{-}} \otimes S_{n_i}^{\left(\alpha_i\right)} \otimes I_{n_i^{+}}) v}{v^{T} (\bar r^{m+\frac{1}{2}}+\sum_{i=1}^d \eta_i I_{n_i^{-}} \otimes P_{\alpha_i} \otimes I_{n_i^{+}})) v} \leq \max \left\{\frac{\hat{r}^{m+\frac{1}{2}}}{\check{r}^{m+\frac{1}{2}}}, 2\right\},
\end{equation}
i.e.,
\begin{equation}\nonumber
\min \left\{\frac{\check{r}^{m+\frac{1}{2}}}{\hat{r}^{m+\frac{1}{2}}}, \frac{3}{8}\right\} \leq \frac{v^{T} (D^{m+\frac{1}{2}}+T) v}{v^{T} P_{\tau}^{m+1} v} \leq \max \left\{\frac{\hat{r}^{m+\frac{1}{2}}}{\check{r}^{m+\frac{1}{2}}}, 2\right\}.
\end{equation}
Therefore, we have
\begin{equation}\nonumber
\lambda_{\min}\Big(P_\tau^{-1}(D^{m+\frac{1}{2}}+T)\Big)=\min\frac{v^{T} (D^{m+\frac{1}{2}}+T) v}{v^{T} P_{\tau}^{m+1} v} \geq \min \left\{\frac{\check{r}^{m+\frac{1}{2}}}{\hat{r}^{m+\frac{1}{2}}}, \frac{3}{8}\right\}
\end{equation}
and
\begin{equation}\nonumber
\lambda_{\max}\Big(P_\tau^{-1}(D^{m+\frac{1}{2}}+T)\Big)=\max\frac{v^{T} (D^{m+\frac{1}{2}}+T) v}{v^{T} P_{\tau}^{m+1} v} \leq \max \left\{\frac{\hat{r}^{m+\frac{1}{2}}}{\check{r}^{m+\frac{1}{2}}}, 2\right\},
\end{equation}
which completes the proof.
\end{proof}

\section{Numerical Experiments}\label{sec:num}
In this section, we choose two examples to show the accuracy of the proposed scheme and the efficiency of the preconditioner. All numerical experiments are performed using MATLAB R2019a on a Lenovo laptop with 16GB RAM, AMD Ryzen 5 4600U with Radeon Graphics $@$ 2.10 GHz.

Since the coefficient matrix in (\ref{matrix-form}) is SPD, we choose the CG method to solve (\ref{matrix-form}), which is defined as `CG'. In addition, the proposed sine transform based preconditioner defined in (\ref{ptau}) can be used to accelerate the convergence of the CG method. We denote it as `P$_\tau$-CG'. For comparisons, two commonly used circulant-type preconditioners (Strang's circulant preconditioner and T. Chan's circulant preconditioner) are also adopted for the CG method, which are denoted as `P$_S$-CG' and `P$_T$-CG', respectively.
Furthermore, three different kinds of induced preconditioners by splitting methods are also considered. The first two are the Strang's circulant preconditioner and the sine transform based preconditioner induced from the splitting method proposed in \cite{bai2017diagonal,bai2020fast}; another one is the sine transform based preconditioner induced from the splitting method proposed in \cite{tang2024new}. Since the induced preconditioners are not symmetric, we solve the system (\ref{matrix-form}) using GMRES with these preconditioners and denote them as `P$_S$-GMRES', `P$_{\tau_1}$-GMRES' and `P$_{\tau_2}$-GMRES', respectively. Last, the fourth-order scheme and the corresponding $\tau$ preconditioner proposed in \cite{she2024tau} are also adopted as a comparison. Although the scheme in \cite{she2024tau} is also fourth-order accurate in space (achieved via compact operators \cite{hao2015fourth}), it generates non-symmetric linear systems \cite{she2024tau}. We therefore solve the preconditioned systems with GMRES, denoted as `P$_{\tau_c}$-GMRES'.
 
In the implementations of CG and GMRES methods, we set $h_1=h_2=\cdots=h_d=h$ and adopt the MATLAB built-in functions $\mathbf{pcg}$ and $\mathbf{gmres}$ with $\mathit{maxit}=\prod\limits_{i=1}^d n_i$ and $\mathit{restart}=30$. The initial guess for all methods at each time step is chosen as the zero vector, and the stopping criterion is set as
$$
\frac{\|\mathbf{r}_k\|_2}{\|\mathbf{r}_0\|_2}\leq 10^{-9},
$$
where $\mathbf{r}_k$ denotes the residual vector at the $k$-th iteration. The parameters in the induced preconditioners are all set as 0.1, while $\bar r^{m+\frac{1}{2}}$ in the preconditioners for `P$_T$-CG', `P$_S$-CG', `P$_\tau$-CG' and `P$_{\tau_c}$-GMRES' are chosen as $\bar r^{m+\frac{1}{2}}=\frac{\hat{r}^{m+\frac{1}{2}}+\check{r}^{m+\frac{1}{2}}}{2}$. Furthermore, to reduce operation costs, all matrix-vector multiplications are fast evaluated via the MATLAB built-in functions {\bf fft}, {\bf ifft} and {\bf dst}.

In all tables, `${\rm E}(h,\Delta t)$' represents the error of the numerical solution and the numerical solution under the discrete $L_2$-norm. The convergence orders in time and space are measured as follows
\begin{equation*}
R_{\Delta t}=\log _{2}\left(\frac{\operatorname{E}\left(h, \Delta t\right)}{\operatorname{E}\left(h, \frac{\Delta t}{2}\right)}\right), \,R_{h}=\log _{2}\left(\frac{\operatorname{E}\left(h, \Delta t\right)}{\operatorname{E}\left(\frac{h}{2}, \Delta t\right)}\right).
\end{equation*}
`CPU(s)' represents the CPU time in seconds for solving the related system; `Iter' stands for the iteration numbers of different methods; When CPU time is large than 10000 seconds, we stop the iteration and denote the corresponding iteration number as `$\dagger$'. 

\begin{example}\label{example1}\cite{she2024tau}
Consider problem (\ref{RSFDEs}) with $d=2$, $\Omega=(0,1)^2$, $T=1$, $K_1=K_2=100$ and the exact solution $u(x_1,x_2,t)=10^4e^{-t}x_1^4(1-x_1)^4x_2^4(1-x_2)^4$. The variable coefficient function is set as $r(x_1,x_2,t)=\frac{x_1^2+x_2^2+e^{-t}}{100}$.
\end{example}


\begin{figure}[htbp]
	\centering
	\begin{minipage}{0.49\linewidth}   
		\centering
		\includegraphics[width=1.1\linewidth]{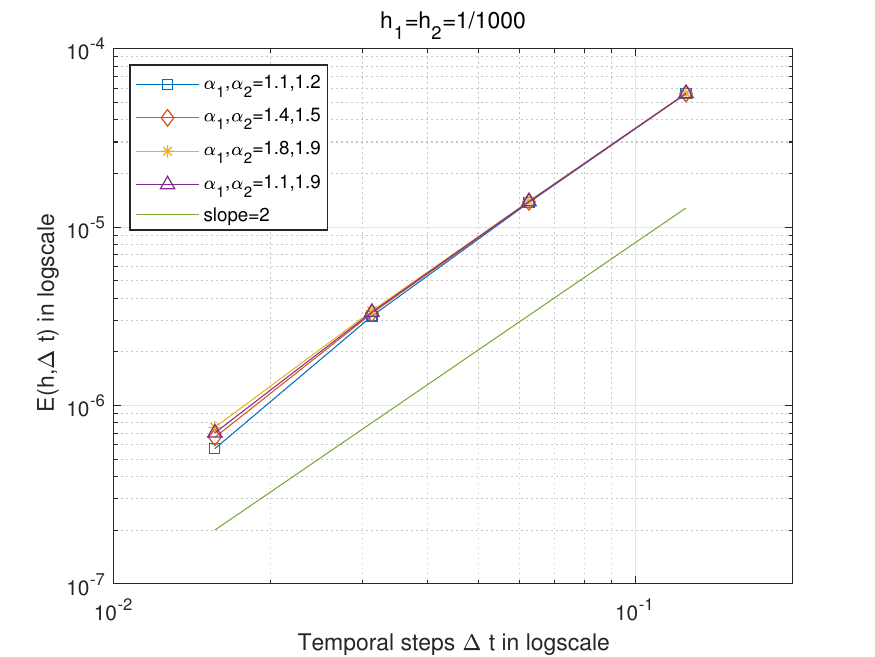}
	\end{minipage}
	\begin{minipage}{0.49\linewidth}
		\centering
		\includegraphics[width=1.1\linewidth]{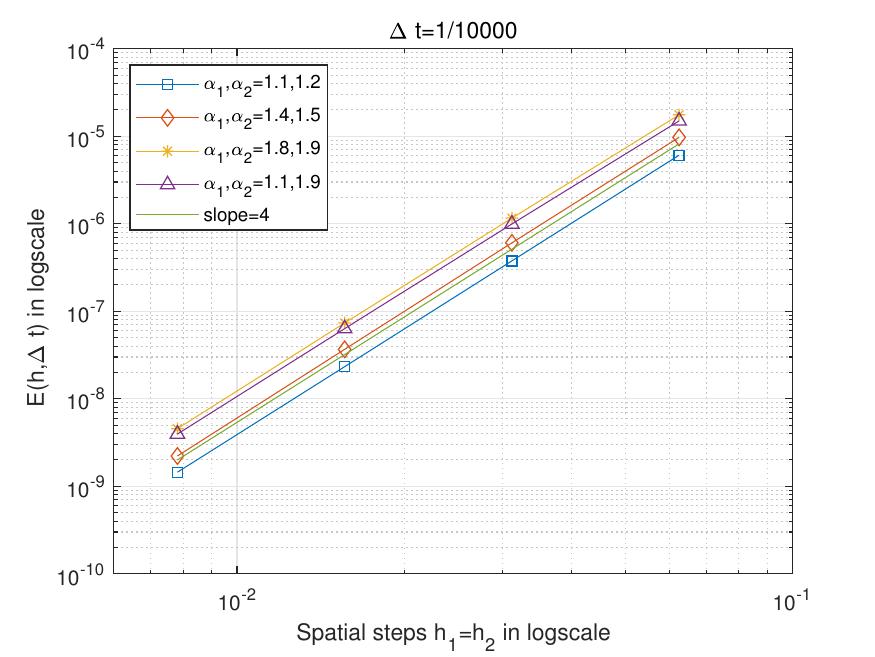}
	\end{minipage}
	\caption{Temporal convergence rate (left) and spatial convergence rate (right) for different fractional orders in Example \ref{example1} at $T=1$.}
	\label{fig1}
\end{figure}

For different orders $\alpha_1$ and $\alpha_2$ ranging from $1$ to $2$, the errors and the corresponding temporal and spatial convergence orders of the proposed scheme are shown in Figure \ref{fig1}. It can be observed that the proposed scheme is stable and achieves a second-order convergence rate in time and a fourth-order convergence rate in space, which aligns well with the theoretical results given in Theorems \ref{stability theorem} and \ref{convergenceTH}. In addition, the CPU time and iteration numbers obtained using the different methods mentioned at the beginning of this section are given in Tables \ref{table3} and \ref{table4}. From the tables, we have the following observations:

\begin{itemize}
\item[(i)] The iteration numbers and CPU time of all preconditioned methods that solve the system (\ref{matrix-form}) can be greatly reduced.

\item[(ii)] The iteration numbers obtained by the preconditioned CG/GMRES methods with the sine transform based preconditioners remain almost constant as the matrix size varies, whereas those obtained with the circulant-type preconditioners show a noticeable increase. This leads to a significant rise in the corresponding computation time as the matrix size increases.

\item[(iii)] Although all the preconditioned solvers with the sine transform based preconditioners show the size-independent iteration numbers, the CPU time of our proposed `P$_\tau$-CG' method requires the least CPU time. This is because, compared to the proposed preconditioner $P_{\tau}^{m+1}$ defined in \eqref{ptau}, other sine transform based preconditioners have more complicated structures and are not as easy to implement as $P_{\tau}^{m+1}$, especially the preconditioners used in `P$_{\tau_c}$-GMRES', see \cite{bai2020fast,bai2017diagonal,tang2024new,she2024tau} for more details. Besides, for `P$_{\tau_c}$-GMRES',  in addition to the complicated structure of the preconditioner, the process in generating the right-hand side of the linear system is also complicated due to the introduction of the compact operator; see, for example, \cite{she2024tau}. Although all the preconditioned solvers with sine transform based preconditioners show the size-free iteration numbers, the simple structure and easy-to-implement features ensure that our method requires minimal computation time, which reflects one of the advantages of our method.
\end{itemize}

To further illustrate the efficiency of our proposed preconditioner, we plot the eigenvalue distributions of the preconditioned matrices in Figure \ref{fig2}. Unlike the CG method, the convergence of GMRES cannot be determined from eigenvalues alone \cite{greenbaum1997iterative}. Therefore, only the spectra of the preconditioned matrices related to the CG methods are plotted. As shown in Figure \ref{fig2}, the eigenvalues of the preconditioned matrices become clustered compared to those of the original coefficient matrix. In addition, the spectrum of the preconditioned matrix of P$_\tau$-CG are bounded, more clustered and farther away from zero compared to those with other two circulant type preconditioners, which illustrates the superior performance of P$_\tau$-CG exibited in Tables \ref{table3} and \ref{table4}, see, for example \cite{chan2007introduction}.

\begin{table}[!tbp]
\centering
\tabcolsep=5pt
\renewcommand{\arraystretch}{0.5}
\caption{Comparison of CPU time and iteration numbers for different $M$, $N$ and orders $(\alpha_1,\alpha_2)=(1.1,1.2)$ and $(\alpha_1,\alpha_2)=(1.4,1.5)$ in Example \ref{example1} at $T=1$.}
\label{table3}
\begin{tabular}{ccccccc}
\hline
\multirow{2}{*}{Methods}& \multirow{2}{*}{$M$}& \multirow{2}{*}{$N$}&\multicolumn{2}{c}{$(\alpha_1,\alpha_2)=(1.1,1.2)$} &\multicolumn{2}{c}{$(\alpha_1,\alpha_2)=(1.4,1.5)$}
   \\ \cmidrule(r){4-5} \cmidrule(r){6-7}
 &&  &CPU(s)  &Iter &CPU(s)  &Iter  \\
\hline 
\multirow{4}{*}{CG}        
         &$2^{3}$&$2^{8}$  &132.46    &243.00      &279.35       &510.94          \\
         &$2^{4}$&$2^{9}$  &1570.70   &370.00      &3706.82      &884.56       \\
         &$2^{5}$&$2^{10}$ &$>10000$  &$\dagger$   &$>10000$     &$\dagger$         \\
\hline          
 \multirow{4}{*}{P$_\tau$-CG}        
         &$2^{3}$&$2^{8}$  &14.04     &10.00      &12.47     &9.00          \\
         &$2^{4}$&$2^{9}$  &101.37    &10.00      &100.44    &10.00       \\
         &$2^{5}$&$2^{10}$ &903.59    &11.00      &829.64    &10.00         \\         
\hline          
 \multirow{4}{*}{P$_T$-CG}        
         &$2^{3}$&$2^{8}$  &21.91     &31.00      &36.17     &53.06          \\
         &$2^{4}$&$2^{9}$  &193.27    &36.00      &378.26    &73.00       \\
         &$2^{5}$&$2^{10}$ &2060.28   &42.47      &4633.21   &98.91         \\
\hline          
 \multirow{4}{*}{P$_S$-CG}        
         &$2^{3}$&$2^{8}$  &18.57    &25.00      &23.81     &33.00          \\
         &$2^{4}$&$2^{9}$  &156.61   &28.00      &224.36    &40.97       \\
         &$2^{5}$&$2^{10}$ &1646.42  &31.98      &2504.94   &48.77         \\
\hline          
 \multirow{4}{*}{P$_S$-GMRES}        
         &$2^{3}$&$2^{8}$  &22.19     &24.00      &31.84     &33.00          \\
         &$2^{4}$&$2^{9}$  &256.41    &28.00      &346.09    &40.00       \\
         &$2^{5}$&$2^{10}$ &2405.11   &30.00      &3614.45   &49.00         \\
\hline          
 \multirow{4}{*}{P$_{\tau_1}$-GMRES}        
         &$2^{3}$&$2^{8}$  &16.77     &10.00      &14.53     &9.00          \\
         &$2^{4}$&$2^{9}$  &136.21    &10.00      &114.51    &9.00       \\
         &$2^{5}$&$2^{10}$ &1035.46   &10.00      &1033.35   &10.00         \\
\hline          
 \multirow{4}{*}{P$_{\tau_2}$-GMRES}        
         &$2^{3}$&$2^{8}$  &16.03     &10.00      &14.65     &9.00          \\
         &$2^{4}$&$2^{9}$  &125.70    &10.00      &125.08    &9.69       \\
         &$2^{5}$&$2^{10}$ &1101.65   &10.73      &1033.42   &10.00         \\
\hline          
 \multirow{4}{*}{P$_{\tau_c}$-GMRES}        
         &$2^{3}$&$2^{8}$  &24.58     &9.00      &22.66    &8.00          \\
         &$2^{4}$&$2^{9}$  &192.21    &9.00      &173.01   &8.00       \\
         &$2^{5}$&$2^{10}$ &1544.60   &9.00      &1413.49  &8.00         \\
\hline
\end{tabular}
\end{table} 

\begin{table}[!tbp]
\centering
\tabcolsep=5pt
\renewcommand{\arraystretch}{0.5}
\caption{Comparison of CPU time and iteration numbers for different $M$, $N$ and orders $(\alpha_1,\alpha_2)=(1.8,1.9)$ and $(\alpha_1,\alpha_2)=(1.1,1.9)$ in Example \ref{example1} at $T=1$.}
\label{table4}
\begin{tabular}{ccccccc}
\hline
\multirow{2}{*}{Methods}& \multirow{2}{*}{$M$}& \multirow{2}{*}{$N$}&\multicolumn{2}{c}{$(\alpha_1,\alpha_2)=(1.8,1.9)$} &\multicolumn{2}{c}{$(\alpha_1,\alpha_2)=(1.1,1.9)$}
   \\ \cmidrule(r){4-5} \cmidrule(r){6-7}
 &&  &CPU(s)  &Iter &CPU(s)  &Iter  \\
\hline 
\multirow{4}{*}{CG}        
         &$2^{3}$&$2^{8}$  &669.03    &1258.94      &910.38    &1721.00          \\
         &$2^{4}$&$2^{9}$  &$>10000$  &$\dagger$    &$>10000$  &$\dagger$       \\
         &$2^{5}$&$2^{10}$ &$>10000$  &$\dagger$    &$>10000$  &$\dagger$         \\
\hline          
 \multirow{4}{*}{P$_\tau$-CG}        
         &$2^{3}$&$2^{8}$  &10.13     &7.00      &12.47     &9.00          \\
         &$2^{4}$&$2^{9}$  &83.32     &8.00      &100.44    &10.00       \\
         &$2^{5}$&$2^{10}$ &686.96    &8.00      &829.96    &10.00         \\         
\hline          
 \multirow{4}{*}{P$_T$-CG}        
         &$2^{3}$&$2^{8}$  &98.85     &134.06         &109.98    &154.00          \\
         &$2^{4}$&$2^{9}$  &1237.89   &225.16         &1256.76   &236.00       \\
         &$2^{5}$&$2^{10}$ &$>10000$  &$\dagger$      &$>10000$  &$\dagger$         \\
\hline          
 \multirow{4}{*}{P$_S$-CG}        
         &$2^{3}$&$2^{8}$  &32.50    &46.00      &48.29     &71.81          \\
         &$2^{4}$&$2^{9}$  &344.21   &64.31      &534.69    &104.75       \\
         &$2^{5}$&$2^{10}$ &3951.71  &82.25      &6720.23   &144.34         \\
\hline          
 \multirow{4}{*}{P$_S$-GMRES}        
         &$2^{3}$&$2^{8}$  &44.66     &48.00      &88.71     &94.00          \\
         &$2^{4}$&$2^{9}$  &517.47    &58.44      &1093.69   &124.97       \\
         &$2^{5}$&$2^{10}$ &5670.66   &76.78      &$>10000$  &$\dagger$         \\
\hline          
 \multirow{4}{*}{P$_{\tau_1}$-GMRES}        
         &$2^{3}$&$2^{8}$  &15.76    &7.00      &14.70     &9.00          \\
         &$2^{4}$&$2^{9}$  &92.63    &7.00      &117.35    &9.00       \\
         &$2^{5}$&$2^{10}$ &757.49   &7.00      &929.71    &9.00         \\
\hline          
 \multirow{4}{*}{P$_{\tau_2}$-GMRES}        
         &$2^{3}$&$2^{8}$  &11.99    &7.00      &14.70     &9.00          \\
         &$2^{4}$&$2^{9}$  &92.71    &7.00      &114.24    &9.69       \\
         &$2^{5}$&$2^{10}$ &757.49   &7.00      &927.55    &9.00         \\
\hline          
 \multirow{4}{*}{P$_{\tau_c}$-GMRES}        
         &$2^{3}$&$2^{8}$  &18.51     &6.00      &20.52    &7.00          \\
         &$2^{4}$&$2^{9}$  &140.34    &6.00      &156.85   &7.00       \\
         &$2^{5}$&$2^{10}$ &1149.98   &6.00      &1409.87  &8.00         \\
\hline
\end{tabular}
\end{table}

\begin{figure}[htbp]
	\centering
	\begin{minipage}{0.49\linewidth}   
		\centering
		\includegraphics[width=1.09\linewidth]{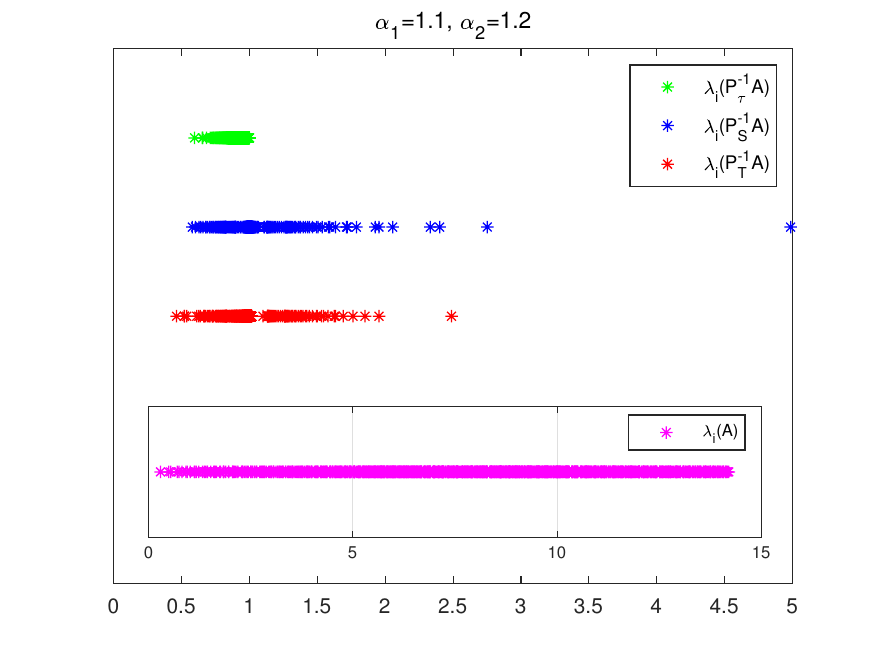}
	\end{minipage}
	\begin{minipage}{0.49\linewidth}
		\centering
		\includegraphics[width=1.09\linewidth]{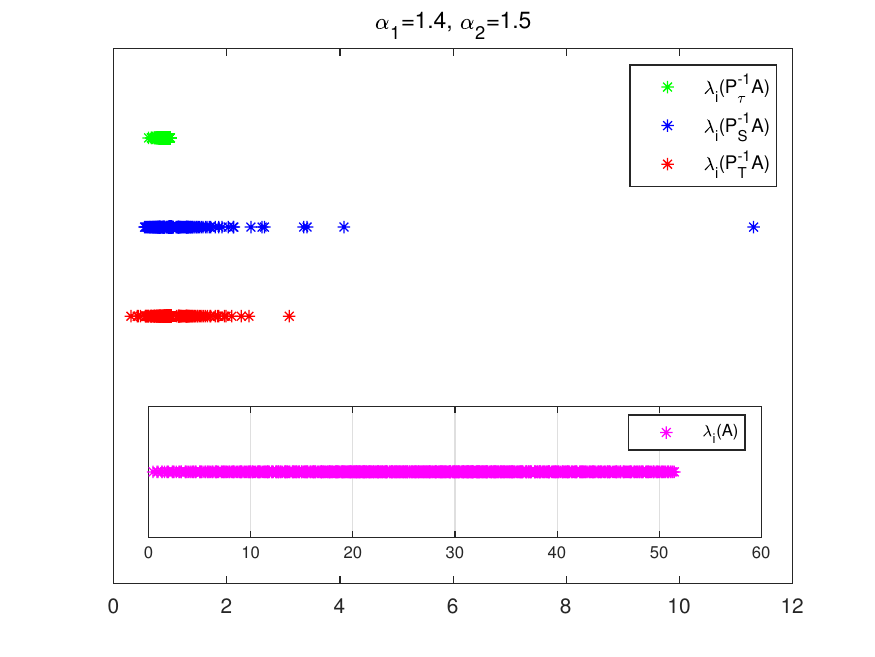}
	\end{minipage}
   \begin{minipage}{0.49\linewidth}   
		\centering
		\includegraphics[width=1.09\linewidth]{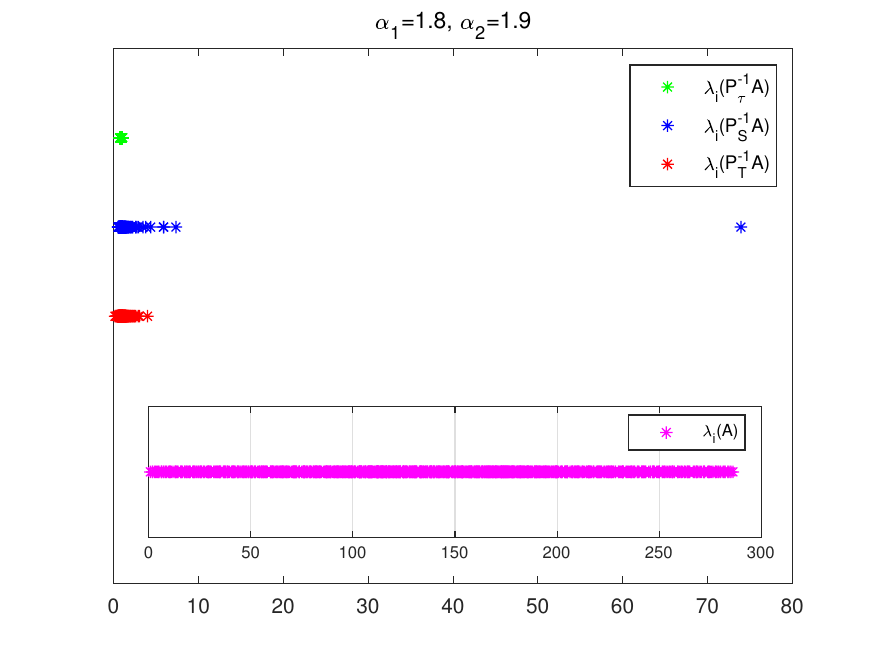}
	\end{minipage}
	\begin{minipage}{0.49\linewidth}
		\centering
		\includegraphics[width=1.09\linewidth]{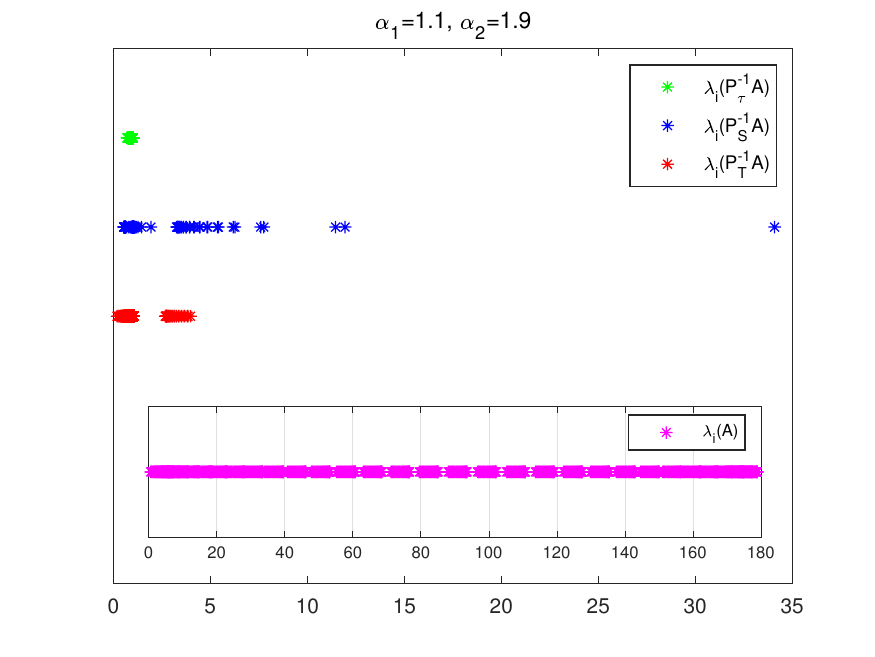}
	\end{minipage} 
	\caption{The eigenvalue distribution of the matrices related to CG and PCG methods in Example \ref{example1} at the last time step with $n_1+1=n_2+1=2^5, M=2^{10}$ for difference fractional orders.}
\label{fig2}       
\end{figure}

\begin{example}\label{example2}\cite{she2024tau}
Consider problem (\ref{RSFDEs}) with $d=3$, $\Omega=(0,1)^3$, $T=1$, $K_1=100$, $K_2=85$, $K_2=103$ and the exact solution $u(x_1,x_2,x_3,t)=10^8e^{-t}x_1^4(1-x_1)^4x_2^4(1-x_2)^4x_3^4(1-x_3)^4$. The variable coefficient function is set as $r(x_1,x_2,x_3,t)=\frac{x_1^2+x_2^2+x_3^2+e^{-t}}{100}$.
\end{example}

\begin{figure}[htbp]
	\centering
	\begin{minipage}{0.49\linewidth}   
		\centering
		\includegraphics[width=1.1\linewidth]{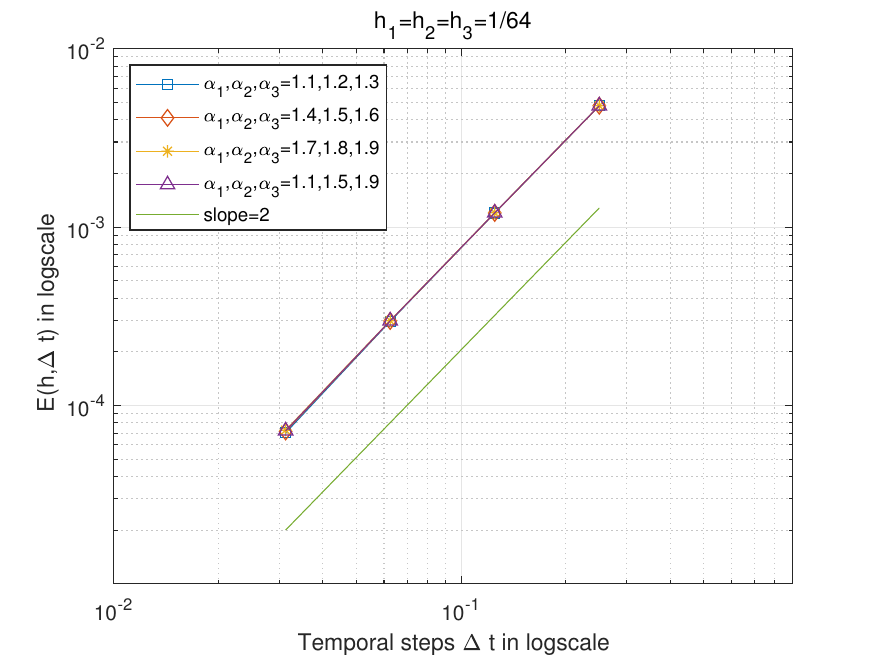}
	\end{minipage}
	\begin{minipage}{0.49\linewidth}
		\centering
		\includegraphics[width=1.1\linewidth]{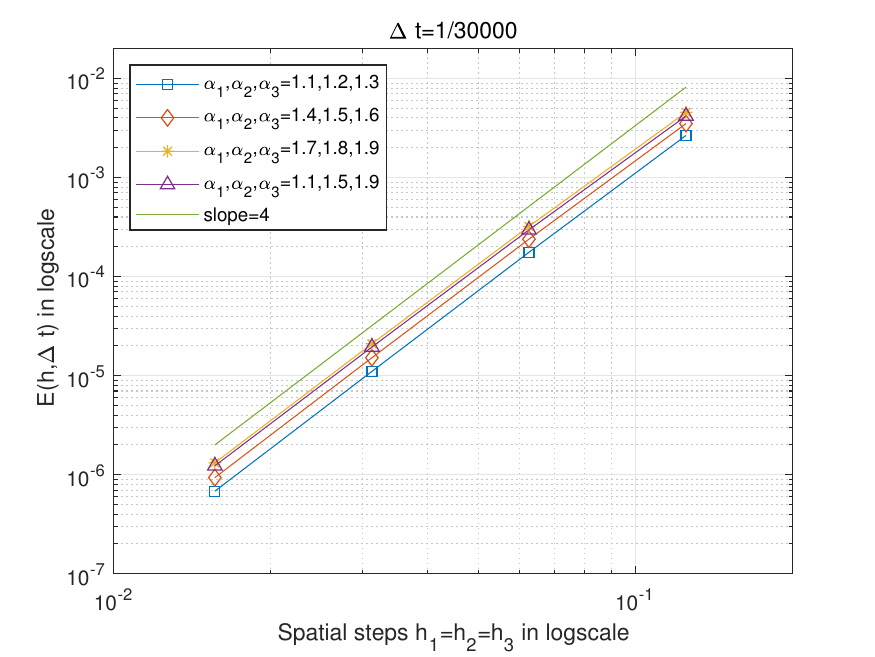}
	\end{minipage}
	\caption{Temporal convergence rate (left) and spatial convergence rate (right) for different fractional orders in Example \ref{example2} at $T=1$.}
	\label{fig3}
\end{figure}

Similar to the two-dimensional problem, for the three-dimensional problem described in the Example \ref{example2}, we first depict the errors and convergence rates for different choices of orders from 1 to 2 when temporal/spatial step size varies in Figure \ref{fig3}. Obviously, the scheme achieves a second-order convergence rate in time and a fourth-order convergence rate in space, which is consistent with our theoretical analysis. Also, the CPU times and iteration numbers of different methods with different fractional orders are listed in Tables \ref{table7} and \ref{table8}. As observed from the two tables, the iteration numbers and CPU times are significantly reduced when the preconditioners are applied. The iteration numbers by CG/GMRES methods with sine transform based preconditioners are stable, contrasting with the significantly increased number of iterations of CG/GMRES methods with the circulant-type preconditioners. Furthermore, `P$_\tau$-CG' performs the best among all methods, with a constant number of iterations and the shortest computation time.
The remarkable performance of the proposed preconditioning strategy can be intuitively illustrated by the bounded and strongly clustered eigenvalues of the preconditioned matrix, as depicted in Figure \ref{fig4}.

\begin{table}[!tbp]
\centering
\tabcolsep=5pt
\renewcommand{\arraystretch}{0.5}
\caption{Comparison of CPU time and iteration numbers for different $M$, $N$ and orders $(\alpha_1,\alpha_2,\alpha_3)=(1.1,1.3,1.5)$ and $(\alpha_1,\alpha_2,\alpha_3)=(1.3,1.5,1.7)$ in Example \ref{example2} at $T=1$.}
\label{table7}
\begin{tabular}{ccccccc}
\hline
\multirow{2}{*}{Methods}& \multirow{2}{*}{$M$}& \multirow{2}{*}{$N$}&\multicolumn{2}{c}{$(\alpha_1,\alpha_2,\alpha_3)=(1.1,1.3,1.5)$} &\multicolumn{2}{c}{$(\alpha_1,\alpha_2,\alpha_3)=(1.3,1.5,1.7)$}
   \\ \cmidrule(r){4-5} \cmidrule(r){6-7}
 &&  &CPU(s)  &Iter &CPU(s)  &Iter  \\
\hline 
\multirow{4}{*}{CG}        
         &$2^{4}$&$2^{6}$  &86.63     &120.88      &118.78    &166.19          \\
         &$2^{5}$&$2^{7}$  &2401.51   &208.97      &3555.00   &309.97       \\
         &$2^{6}$&$2^{8}$  &$>10000$  &$\dagger$   &$>10000$  &$\dagger$         \\
\hline          
 \multirow{4}{*}{P$_\tau$-CG}        
         &$2^{4}$&$2^{6}$ &15.24     &8.00      &13.80     &8.00          \\
         &$2^{5}$&$2^{7}$ &251.74    &9.00      &228.63    &8.00       \\
         &$2^{6}$&$2^{8}$ &3971.93   &9.00      &4066.01   &9.00         \\         
\hline          
 \multirow{4}{*}{P$_T$-CG}        
         &$2^{4}$&$2^{6}$ &25.95     &30.00      &31.41     &37.00          \\
         &$2^{5}$&$2^{7}$ &613.14    &39.00      &818.11    &54.00       \\
         &$2^{6}$&$2^{8}$ &$>10000$  &$\dagger$  &$>10000$  &$\dagger$        \\
\hline          
 \multirow{4}{*}{P$_S$-CG}        
         &$2^{4}$&$2^{6}$ &22.17    &25.00      &25.18     &29.06          \\
         &$2^{5}$&$2^{7}$ &458.66   &31.00      &593.46    &39.00       \\
         &$2^{6}$&$2^{8}$ &8332.15  &38.25      &$>10000$  &$\dagger$         \\
\hline          
 \multirow{4}{*}{P$_S$-GMRES}        
        &$2^{4}$&$2^{6}$ &29.33     &28.00       &35.84     &34.00          \\
        &$2^{5}$&$2^{7}$ &721.48    &34.00       &900.20    &42.59       \\
        &$2^{6}$&$2^{8}$ &$>10000$  &$\dagger$   &$>10000$  &$\dagger$         \\
\hline          
 \multirow{4}{*}{P$_{\tau_1}$-GMRES}        
        &$2^{4}$&$2^{6}$ &19.19     &9.00       &18.55     &9.00          \\
        &$2^{5}$&$2^{7}$ &328.92    &10.00      &300.54    &9.00       \\
        &$2^{6}$&$2^{8}$ &5371.95   &10.00      &4893.83   &9.00         \\
\hline          
 \multirow{4}{*}{P$_{\tau_2}$-GMRES}        
         &$2^{4}$&$2^{6}$ &19.23     &9.00       &19.13     &9.00          \\
         &$2^{5}$&$2^{7}$ &328.90    &10.00      &300.73    &9.00       \\
         &$2^{6}$&$2^{8}$ &5332.72   &10.00      &4868.79   &9.00         \\
\hline          
 \multirow{4}{*}{P$_{\tau_c}$-GMRES}        
         &$2^{4}$&$2^{6}$ &36.70     &8.00      &33.55    &7.00          \\
         &$2^{5}$&$2^{7}$ &557.54    &8.00      &557.99   &8.00       \\
         &$2^{6}$&$2^{8}$ &9109.38   &8.00      &9109.44  &8.00         \\
\hline
\end{tabular}
\end{table}

\begin{table}[!tbp]
\centering
\tabcolsep=5pt
\renewcommand{\arraystretch}{0.5}
\caption{Comparison of CPU time and iteration numbers for different $M$, $N$ and orders $(\alpha_1,\alpha_2,\alpha_3)=(1.5,1.7,1.9)$ and $(\alpha_1,\alpha_2,\alpha_3)=(1.1,1.5,1.9)$ in Example \ref{example2} at $T=1$.}
\label{table8}
\begin{tabular}{ccccccc}
\hline
\multirow{2}{*}{Methods}& \multirow{2}{*}{$M$}& \multirow{2}{*}{$N$}&\multicolumn{2}{c}{$(\alpha_1,\alpha_2,\alpha_3)=(1.5,1.7,1.9)$} &\multicolumn{2}{c}{$(\alpha_1,\alpha_2,\alpha_3)=(1.1,1.5,1.9)$}
   \\ \cmidrule(r){4-5} \cmidrule(r){6-7}
 &&  &CPU(s)  &Iter &CPU(s)  &Iter  \\
\hline 
\multirow{4}{*}{CG}        
        &$2^{4}$&$2^{6}$ &156.14    &255.50       &164.95    &238.00          \\
        &$2^{5}$&$2^{7}$ &4896.43   &434.00       &5244.91   &465.00       \\
        &$2^{6}$&$2^{8}$ &$>10000$  &$\dagger$    &$>10000$  &$\dagger$       \\
\hline          
 \multirow{4}{*}{P$_\tau$-CG}        
         &$2^{4}$&$2^{6}$ &14.03     &7.00      &15.24     &8.00          \\
         &$2^{5}$&$2^{7}$ &231.80    &8.00      &231.62    &8.00       \\
         &$2^{6}$&$2^{8}$ &3681.58   &8.00      &4064.83   &9.00         \\        
\hline          
 \multirow{4}{*}{P$_T$-CG}        
        &$2^{4}$&$2^{6}$ &39.75     &49.00      &42.04     &52.00          \\
        &$2^{5}$&$2^{7}$ &1086.96   &73.88      &1277.61   &83.00       \\
        &$2^{6}$&$2^{8}$ &$>10000$  &$\dagger$  &$>10000$  &$\dagger$         \\
\hline          
 \multirow{4}{*}{P$_S$-CG}        
        &$2^{4}$&$2^{6}$ &30.13    &35.19      &34.49     &40.94          \\
        &$2^{5}$&$2^{7}$ &729.09   &47.81      &848.78    &57.47       \\
        &$2^{6}$&$2^{8}$ &$>10000$ &$\dagger$  &$>10000$  &$\dagger$         \\
\hline          
 \multirow{4}{*}{P$_S$-GMRES}        
         &$2^{4}$&$2^{6}$ &44.65     &41.00      &49.68     &49.00          \\
         &$2^{5}$&$2^{7}$ &1066.00   &50.00      &1425.04   &65.53       \\
         &$2^{6}$&$2^{8}$ &$>10000$ &$\dagger$   &$>10000$  &$\dagger$          \\
\hline          
 \multirow{4}{*}{P$_{\tau_1}$-GMRES}        
        &$2^{4}$&$2^{6}$ &17.61     &8.00      &17.44     &8.00          \\
        &$2^{5}$&$2^{7}$ &272.33    &8.00      &300.46    &9.00       \\
        &$2^{6}$&$2^{8}$ &4439.35   &8.00      &4829.73   &9.00         \\
\hline          
 \multirow{4}{*}{P$_{\tau_2}$-GMRES}        
         &$2^{4}$&$2^{6}$ &17.57     &8.00      &17.49     &8.00          \\
         &$2^{5}$&$2^{7}$ &272.58    &8.00      &300.34    &9.00       \\
         &$2^{6}$&$2^{8}$ &4610.52   &8.58      &4875.44   &9.00         \\
\hline          
 \multirow{4}{*}{P$_{\tau_c}$-GMRES}        
         &$2^{4}$&$2^{6}$ &32.54     &7.00      &33.56    &7.00          \\
         &$2^{5}$&$2^{7}$ &509.99    &7.00      &509.69   &7.00       \\
         &$2^{6}$&$2^{8}$ &8173.98   &7.00      &8929.78  &8.00         \\
\hline
\end{tabular}
\end{table}

\begin{figure}[htbp]
	\centering
	\begin{minipage}{0.49\linewidth}   
		\centering
		\includegraphics[width=1.09\linewidth]{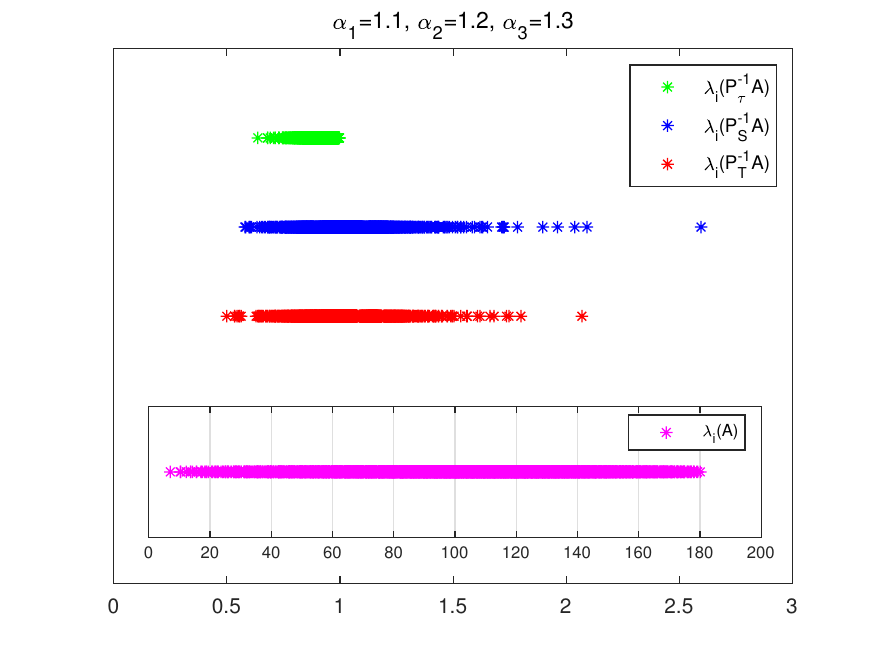}
	\end{minipage}
	\begin{minipage}{0.49\linewidth}
		\centering
		\includegraphics[width=1.09\linewidth]{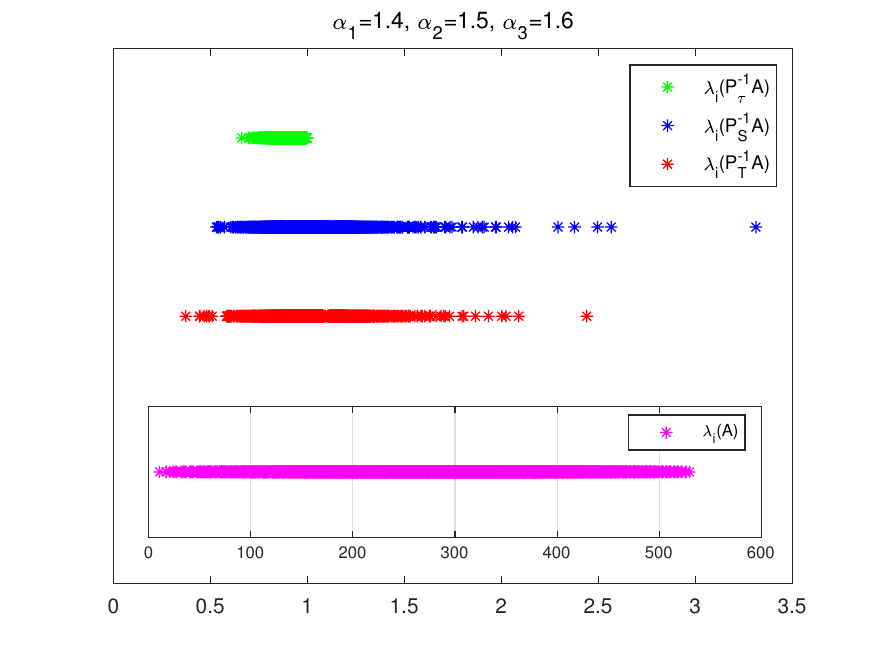}
	\end{minipage}
   \begin{minipage}{0.49\linewidth}   
		\centering
		\includegraphics[width=1.09\linewidth]{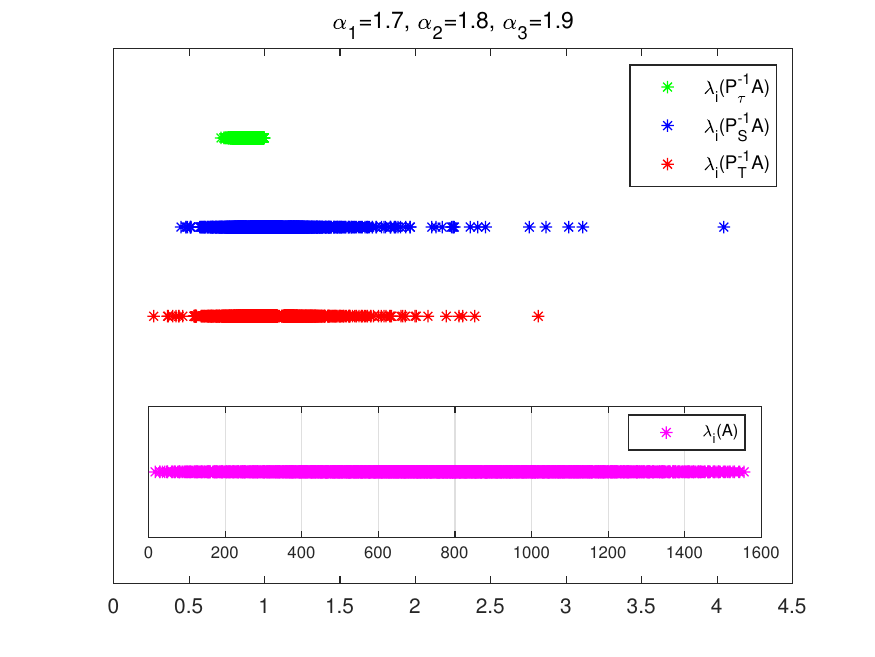}
	\end{minipage}
	\begin{minipage}{0.49\linewidth}
		\centering
		\includegraphics[width=1.09\linewidth]{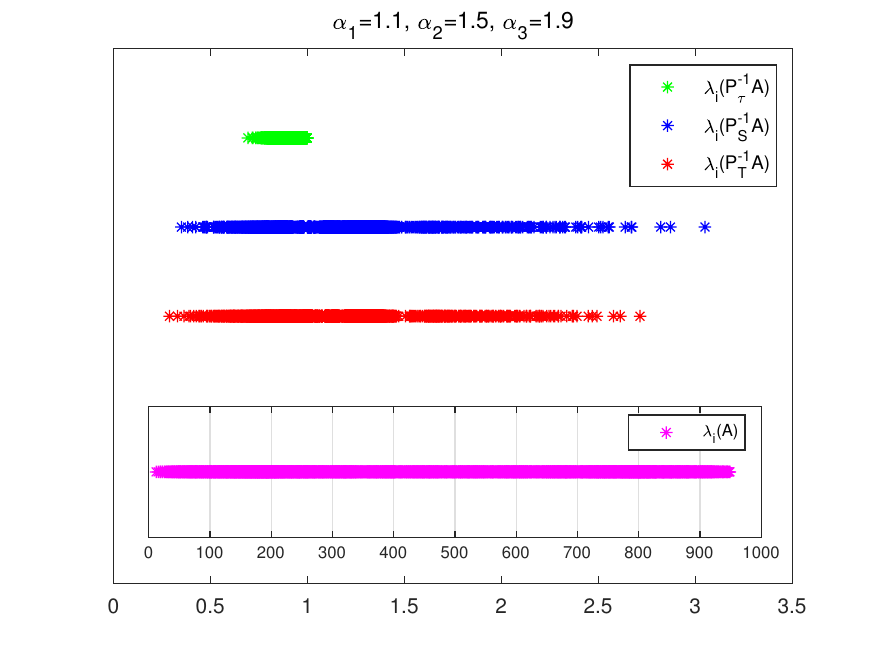}
	\end{minipage} 
\caption{The eigenvalue distribution of the matrices related to CG and PCG methods in Example \ref{example2} at the last time step with $n_1+1=n_2+1=n_3+1=2^4, M=2^6$ for difference fractional orders.}
\label{fig4}       
\end{figure}

\section{Conclusions}\label{sec:con}
In this paper, a novel CN-4FCD method is employed to discretize multi-dimensional Riesz space fractional diffusion equations with variable coefficients. The unconditional stability and convergence are analyzed in detail. Moreover, we propose the PCG method with sine transform based preconditioner for solving the resulting linear systems. Theoretically, we show that the spectra of the preconditioned matrices are uniformly bounded by two positive constants, which are away from zero and independent of the matrix size. This result indicates that the iteration numbers of the PCG method with the proposed sine transform based preconditioner are independent of the mesh size. Two numerical examples are given to support the accuracy of the scheme and the efficiency of the proposed preconditioner. It is worth noting that, compared to other existing preconditioners, the simple structure and ease of implementation of our proposed preconditioner can achieve not only optimal convergence, but also the shortest computational time. In our future work, the optimal preconditioned strategy for high-order scheme for multidimensional fractional diffusion equations with non-symmetric spatial operators, such as the Riemann-Liouville operator, will be studied.



\bibliographystyle{elsarticle-num-names}
\bibliography{ref}

\end{document}